\newtheorem{theorem}{Theorem}[section]
\newtheorem{lemma}[theorem]{Lemma}
\newtheorem{proposition}{Proposition}
\theoremstyle{definition}
\newtheorem{definition}[theorem]{Definition}
\newtheorem{remark}{Remark}
\newcommand{\ep}{\varepsilon}
\title[Computing the Minimal Average Energy]
      {Perturbation and Numerical Methods
for Computing the Minimal Average Energy}
\author[Timothy Blass and Rafael de la Llave]{}
\subjclass{Primary: 35B27, 58E15; Secondary: 41A58, 65M99.}
 \keywords{Cell problem, Plane-like minimizers, Minimal average
energy, Sobolev gradient descent, Lindstedt series, quasiperiodic solutions
of PDE.}
\email{tblass@math.utexas.edu}
\email{llave@math.utexas.edu}
\thanks{This work has been
supported by NSF grant DMS 0901389 and Texas Coordinating
board NHARP 0223}
\begin{document}
\maketitle

\centerline{\scshape Timothy Blass and Rafael de la Llave}
\medskip
{\footnotesize
 \centerline{Department of Mathematics, 1 University Station C1200}
   \centerline{Austin, TX 78712-0257, USA}
} 

%

\bigskip


\begin{abstract}
We investigate the differentiability of minimal average
energy associated to the functionals
$S_\ep (u) = \int_{\mathbb{R}^d} \frac{1}{2}|\nabla u|^2 + \ep V(x,u)\, dx$,
using numerical and perturbative methods. We use
the Sobolev gradient descent method as a numerical tool to
compute solutions of the Euler-Lagrange equations
with some periodicity conditions; this is
the cell problem in homogenization.
We use these solutions to determine the average minimal energy
as a function of the slope.
We also obtain a representation of the solutions to the Euler-Lagrange
equations as a Lindstedt series in the perturbation parameter
$\ep$, and use this to confirm our numerical results. Additionally, we
prove convergence of the Lindstedt series.
\end{abstract}


\section{Introduction}
Let $d \in \mathbb{N}$ be fixed, and let $V: \mathbb{R}^d \times \mathbb{R} \to \mathbb{R}$, be periodic
under integer translations. That is $V(x+k,y+l) = V(x,y)$ for all
$(k,l)\in \mathbb{Z}^d \times \mathbb{Z}$, where $(x,y)=(x_1,\ldots , x_d,y)\in \mathbb{R}^d \times \mathbb{R}$.
Furthermore, assume $V$ is analytic.
We consider the formal variational problem
\begin{equation} \label{principle}
S_{\! \ep} (u(x)) = \int_{\mathbb{R}^d} \frac{1}{2}|\nabla u(x)|^2 + \ep V(x,u(x))\, dx,
\end{equation}
where $\ep$ is a small parameter, so that $S_{\! \ep}$ is a small
perturbation of $S_{\! 0}(u) = \int_{\mathbb{R}^d} \frac{1}{2}|\nabla u|^2\, dx$.
We call $u \in H^1_{loc}(\mathbb{R}^d)$ a minimizer (or minimal solution)
of $S_{\! \ep}$ if for all $\phi \in H^1_{comp}(\mathbb{R}^d)$
\[
\int_{supp(\phi)} \frac{1}{2}|\nabla (u+\phi )|^2 + \ep V(x,u+\phi )
-\frac{1}{2}|\nabla u|^2 - \ep V(x,u) \, dx \geq 0.
\]
Any minimizer of (\ref{principle}) must solve the Euler-Lagrange equation
\begin{equation} \label{EL}
-\Delta u + \ep V_y(x,u) =0,
\end{equation}
and by standard elliptic regularity theory will be at least as regular as $V$,
so analytic in our case.
\begin{definition}
Following \cite{Moser86}, we say a continuous function $u$ is
\emph{non-selfintersecting} if the  graph of $u$ does
not intersect integer translates of itself. That is, if
$u \in C^0(\mathbb{R}^d,\mathbb{R})$ and
 $\forall (k,l) \in \mathbb{Z}^d \times \mathbb{Z}$
\begin{equation} \label{birkhoff}
u(x+k) + l -u(x) >0, \,\,\mbox{or}\,\, <0, \,\, \mbox{or} \equiv 0,
\end{equation}
where the three alternatives are independent of $x$.
\end{definition}
This is also referred to in the literature
as the \emph{Birkhoff} property.
From a geometric viewpoint, this means  that the graph of $u$ projects
into $\mathbb{T}^{d+1} = \mathbb{R}^{d+1}/\mathbb{Z}^{d+1}$
without intersecting itself, unless it coincides exactly.

If $u$ is non-selfintersecting, then there is a
\emph{rotation vector}, $\omega \in \mathbb{R}^d$, associated to $u$ such that
\begin{equation} \label{planelike}
\sup_{x\in \mathbb{R}^d}|u(x) - \omega \cdot x| < \infty
\end{equation}
\cite{Moser86}. A function $u$ satisfying (\ref{planelike}) is
called \emph{plane-like} because its graph is at a bounded
distance from the a hyperplane
in $\mathbb{R}^{d+1}$ with normal vector $(\omega, -1)$.

We denote the set of all non-selfintersecting minimizers of $S_{\! \ep}$
with rotation vector $\omega$ by $\mathcal{M}_{\omega}(S_{\! \ep})$, and more briefly
as $\mathcal{M}_{\omega}$, where dependence on $S_{\! \ep}$ is understood. As shown
in \cite{Moser86}, $\mathcal{M}_{\omega}$ is nonempty for all $\omega \in \mathbb{R}^d$.

The rational dependency of $\omega$ can play a role in the structure
of $\mathcal{M}_{\omega}$. The case where $\bar{\omega} = (\omega,-1)$ is rationally independent
was studied in \cite{Bangert87a}, and the
rationally dependent case in \cite{Bangert89}.

We define the minimal average energy $A_{\varepsilon} : \mathbb{R}^d \to \mathbb{R}$ by
\begin{equation} \label{Ae}
A_{\varepsilon} (\omega) = \lim_{R \to \infty} \frac{1}{|B_R|}\int_{B_R}\frac{1}{2}|\nabla u|^2
+ \ep V(x,u)\, dx,
\end{equation}
where $u\in \mathcal{M}_{\omega}$, $B_R=\{x\in \mathbb{R}^d : |x| \leq R\}$.
It was shown in \cite{Senn91} that $A_{\varepsilon}$ is well-defined,
that is, the limit exists and is independent of the choice of
minimizer $u \in \mathcal{M}_{\omega}$.
Furthermore, $A_{\varepsilon}$ is convex, so that one-sided derivatives of $A_{\varepsilon}$
exist at each $\omega\in \mathbb{R}^d$, \cite{Senn91}. In fact, for $\ep =0$,
any $u\in \mathcal{M}_{\omega}$
has the form $u(x) = \omega \cdot x + \alpha$ for some $\alpha \in \mathbb{R}$ (see
\cite{Moser86}).
Thus, $A_0(\omega)=\frac{1}{2}|\omega |^2$ is smooth. However, for typical $V$
the differentiability of $A_{\varepsilon}$ breaks down when
$\ep >0$, and for large
enough $\ep$ the set of points where $A_{\varepsilon}$ is not differentiable will
be dense in $\mathbb{R}^d$ (see \cite{Bangert87b}).
Because $A_{\varepsilon}$ has one-sided derivatives
the graph of $A_{\varepsilon}$ will have ``corners''
(i.e. the jump in the gradient of $A_{\varepsilon}$ in a direction $e_j$)
at points of nondifferentiability. In \cite{Senn95}, page 356 there is a
formula for the one-sided directional derivative of $A_{\varepsilon}$ involving
special types of minimizers of $S_{\! \ep}$, described in Section \ref{Pert}.

In this paper, we present a numerical approach to computing
solutions of (\ref{EL}) via a gradient descent method known
as the Sobolev gradient, as explained in Section \ref{Numerics}.
We use this to compute $A_{\varepsilon}$ and
$D_{e_j}A_{\varepsilon}(\omega) + D_{-e_j}A_{\varepsilon}(\omega)$, that is
the jump in the gradient of $A_{\varepsilon}$ in the direction $e_j$. We will
sometimes refer to a jump in the gradient as a ``corner''.
In Section \ref{Pert} we present a perturbation method
for finding the minimizers
of $S_{\! \ep}$ needed to apply the formula provided in \cite{Senn95}
to compute $D_{e_j}A_{\varepsilon}(\omega) + D_{-e_j}A_{\varepsilon}(\omega)$.

Throughout the paper, we will deal with the case $\omega\in \frac{1}{N}\mathbb{Z}^d$.
There are two good reasons for this. The first is that minimizers
of (\ref{principle}) with irrational rotation vectors
can be obtained as limits
of sequences of minimizers with rational rotation vectors.  The second reason
is that for rational $\omega$, the partial differential equations we need
to solve are well-defined on the torus $\mathbb{T}^d$. That is, they have
periodic boundary conditions. This allows the use of Fourier transforms
in the numerical method. It also makes each equation in (\ref{Lind})
of the form $\Delta \phi = g$, which can be solved for $\phi$
provided $g$ has average zero.

In this setting, for a fixed $\omega\in \frac{1}{N}\mathbb{Z}^d$,
 the formal functional in (\ref{principle}) can
be replaced by the reduced functional
\begin{equation} \label{S_red}
S_{\! \ep, N} (u) = \int_{N\mathbb{T}^d}\frac{1}{2}|\nabla u|^2 + \ep V(x,u)\, dx,
\quad u = \omega \cdot x + z(x)
\end{equation}
where $z$ is $N\mathbb{Z}^d$-periodic. Considering the limits of solutions
$u_N(x) = \omega_N x + z_N(x)$, $\omega_N\in \frac{1}{N}\mathbb{Z}^d$ as $N \to \infty$,
we see that this problem is related to periodic homogenization. Rescaling
so that (\ref{S_red}) is defined on the unit cube, the process of letting
$N\to \infty$ is the same as letting the $x$ dependence oscillate very
rapidly.


\section{Numerics} \label{Numerics}

In this section we investigate numerically the size of
the corners of $A_{\varepsilon}$ as $\ep$ varies. Since our problem comes
from a variational principle, steepest descent methods are a
natural approach. We consider a fixed $\omega\in \frac{1}{N}\mathbb{Z}^d$ and
seek solutions of (\ref{EL}) of the form $u(x) = \omega \cdot x + z(x)$,
where $z$ is $N$-periodic. To find such a function $u$, we solve
\begin{equation} \label{ELz}
\Delta z = \ep V_y(x,\omega \cdot x + z), \quad
z(x + Nk) = z(x), \, \forall k \in \mathbb{Z}^d,
\end{equation}
for $z$, and set $u(x) = \omega \cdot x + z(x)$.
In this setting, we see that $z$ is a critical point of
the \emph{reduced variational problem}
\[
S_{\! \ep, N} (z) = \int_{N\mathbb{T}^d}\frac{1}{2}|\nabla z|^2 + \ep V(x,\omega \cdot x + z)\, dx,
\]
where $N\mathbb{T}^d = \mathbb{R}^d/N\mathbb{Z}^d$.
The Fr\'echet derivative of $S_{\! \ep, N}$ at $z$, applied to
$\eta \in H^1$ is
\[
DS_{\! \ep, N} (z)\eta = \int_{\mathbb{T}^d} \nabla z\cdot \nabla \eta + \ep V_y(x,\omega \cdot x + z)
\eta \,dx.
\]
We recall that the gradient of $S_{\! \ep, N}$ with respect to a Hilbert space, $H$, is
the unique element of $h \in H$ such that
$DS_{\! \ep, N} (z)\eta = \langle g, \eta \rangle_H$ for all $\eta \in H$.
For instance, the $L^2$-gradient of $S_{\! \ep, N}$ at $z$ is the unique element of
$ L^2 = H^0$, which we will write as $\nabla_0 S_{\! \ep, N} (z)$, such that
$DS_{\! \ep, N} (u)\eta = \langle \nabla_0 S_{\! \ep, N} (z), \eta \rangle_{L^2}$.
Integrating by parts, we have,
$DS_{\! \ep, N} (z)\eta = \langle -\Delta z + \ep V_y(x,\omega \cdot x + z), \eta \rangle_{L^2}$.

Considering gradients with respect to other inner products
has been a fruitful endeavor (see \cite{Neuberger10}).
In our particular case, by considering
the gradient of $S_{\! \ep, N}$ in  $H^{\beta}$ for
$\beta \in (0,1]$ we avoid the stiffness of the problem that
appears in the $H^0$ case. This is explained in more detail in the
remark following the derivation of $\nabla_{\beta} S_{\! \ep, N} (z)$.

The standard inner product on $H^{\beta}(N\mathbb{T}^d)$ is
$\langle u, v \rangle_{H^{\beta}} = \langle (I - \Delta)^{\beta} u,v
\rangle_{H^0}$. For $\gamma > 0$ we define the inner product
$\langle u, v \rangle_{\beta} = \langle (\gamma I - \Delta)^{\beta} u,v
\rangle_{H^0}$, which determines a norm on $H^{\beta}$ that is equivalent
to the standard norm. For more details on this see the introduction of
\cite{BlassLlaveVal10}.
In that paper, the authors show that the descent equation
$\partial_t u = - \nabla_{\beta} S_{\! \ep, N} (u)$ satisfies a comparison
principle, and preserves the class of non-selfintersecting functions.
In this way, they obtain critical points of $S_{\! \ep, N}$ with rotation vector
$\omega$ by choosing initial condition $u(x,0) = \omega \cdot x$, equivalently,
$z(x,0) = u(x,0) - \omega \cdot x = 0$.

We note that different choices of $\gamma$ result in different inner
 products, and
therefore different gradients of $S_{\! \ep, N}$. Thus, $\nabla_{\beta} S_{\! \ep, N} (u)$
depends
not only on the choice of Hilbert space, but also on the choice of
inner product
on that space.
We calculate the $H^{\beta}$-gradient as follows:
\begin{eqnarray*}
DS_{\! \ep, N}(z)\eta &=& \int_{N\mathbb{T}^d} \nabla z\cdot \nabla \eta +\ep V_y(x,\omega \cdot x +z)\eta
\,dx\\
&=& 
\left\langle -\Delta z +\ep V_y(x,\omega \cdot x + z), \eta \right\rangle_{L^2} \\
&=& \left\langle (\gamma I -\Delta)^{\beta}(\gamma I-\Delta)^{-\beta}
(-\Delta z +\ep V_y(x,\omega \cdot x + z)) ,\eta
\right\rangle_{L^2} \\
&=&  \left\langle (\gamma I -\Delta)^{-\beta}(\gamma z -\Delta z -\gamma z +
\ep V_y(x,\omega \cdot x + z)) ,\eta \right\rangle_{{\beta}} \\
&=&  \left\langle (\gamma I -\Delta)^{1-\beta}\, z -
(\gamma I -\Delta)^{-\beta}
(\gamma z -\ep V_y(x,\omega \cdot x + z)) , \eta \right\rangle_{{\beta}}.  \\
\end{eqnarray*}
Thus, our steepest descent equation in $H^{\beta}$, $\partial_t z =
-\nabla_{\beta} S_{\! \ep, N} (z)$, becomes
\begin{equation} \label{SobGrad}
\partial_t z = -\left(\gamma I -\Delta \right)^{1-\beta}z +
\left(\gamma I -\Delta \right)^{-\beta}
\left(\gamma z -\ep V_y(x,\omega \cdot x + z) \right),
\end{equation}
$z:[0,N]^d \to \mathbb{R}$ with periodic boundary conditions.

If the solution $z(x,t)$ of
(\ref{SobGrad}) approaches a critical point of $S_{\! \ep, N}$, that is $z(x,t) \to z_c(x)$ as
$t \to \infty$, then $z_c$ will solve $(\gamma+\Delta)^{1-\beta}\, z_c =
(\gamma+\Delta)^{-\beta}(\gamma z_c - V_y(x,\omega \cdot x +z_c))$, which reduces to
$-\Delta z_c + \ep V_y(x,\omega \cdot x + z_c) = 0$. Then $u_c = \omega \cdot x + z_c$ will
solve (\ref{EL}) and $\sup_x |u(x) - \omega \cdot x| < \infty$.
\begin{remark}
The stiffness of (\ref{SobGrad}) is significantly reduced for values of
$\beta \approx 1$. For instance,  the $L^2$ gradient descent equation
would be $\partial_t u = \Delta u - \ep V_y(x,u)$.
If we set $G(k)=\mathcal{F}\{-\ep V_y(x,u)\}(k)$, then
the descent equation in the Fourier domain becomes the system
$\partial_t \hat{u}_k = |k|^2\hat{u}_k + G(k)$ for each $k$.
As $|k|$ grows, these equations become increasingly stiff (the
eigenvalues of the linearization have a large spread). However,
the $H^{\beta}$ gradient descent equation in the Fourier domain becomes
$\partial_t \hat{u}_k = (\gamma + |k|^2)^{1-\beta}\hat{u}_k +
(\gamma + |k|^2)^{-\beta}G(k)$. So for $\beta \approx 1$ the stiffness
is greatly reduced.
\end{remark}

\subsection{Implementation of Sobolev gradient descent} \label{implement}
We fix $\omega\in \frac{1}{N}\mathbb{Z}^d$ and seek $z:[0,\infty)\times [0,N]^d \to \mathbb{R}$
solving (\ref{ELz}) with periodic boundary conditions. Thus, (\ref{ELz})
can be rewritten in the frequency domain via the Fourier transform.
The main benefit of this is that the pseudo-differential operators
in (\ref{ELz}) simplify greatly in the frequency domain. However,
the composition $V_y(x,\omega \cdot x + z(x))$ is complicated by the Fourier transform.
We take advantage of the simplicity of the operators in the frequency
domain in our numerical scheme, and pay the price of computing an
inverse fast Fourier transform at each time-step. This is explained
in the remark following equation (\ref{NumEq}).

We now develop the details of the implementation for $d=2$.
We know that $z(x,t) = u(x,t) - w\cdot x$ will be $N-$periodic if
we have $N\omega \in \mathbb{Z}^2$ and if $z(x,0) = u(x,0) - \omega \cdot x$
is an $N-$periodic function.
So we set $z_0(x) = z(x,0) = 0$, and
consider the equation satisfied by $\hat{z}$,
the Fourier transform of $z$.
We get
\begin{equation}
\hat{z}_t(t,\xi) = -(\gamma+|\xi|^2)^{1-\beta}\, \hat{z}(t,\xi) +
(\gamma+|\xi|^2)^{-\beta}(\gamma \, \hat{z}(t,\xi) -
\mathcal{F} [V_y(x,\omega \cdot x + z)](t,\xi)),
\end{equation}
where we have also used $\mathcal{F}[\cdot ]$ to denote the Fourier transform.
We now choose a time step $\Delta t$ and set $t_n=n \Delta t$.
We also break up the domain $[0,N]^2$ into $m^2$ discrete points
and represent $z(t_n,x)$ as an $m \times m$ array $z_n(i,j)$.
So, representing the discrete fast Fourier transform as $\mbox{fft}[\cdot]$
(and with $\hat{z}_n = \mbox{fft}[z_n]$) equation (3) is approximated as
\begin{equation}
\frac{\hat{z}_{n+1}-\hat{z}_n}{\Delta t} = -(\gamma+|\xi|^2)^{1-\beta}
\hat{z}_{n+1} + (\gamma+|\xi|^2)^{-\beta}(\gamma \hat{z}_{n+1} -
\mbox{fft}[V_y(x,\omega \cdot x  + z_n)]).
\end{equation}
This is a quasi-implicit method, since nearly all of the right-hand-side is
evaluated at the later time $t_{n+1}$.
Only the nonlinear term is evaluated at
time $t_n$, so we can easily solve for $\hat{z}_{n+1}$:
\begin{equation}\label{NumEq}
\hat{z}_{n+1}(i,j) = \frac{\hat{z}_n(i,j) - \Delta t (\gamma+\xi_1(i)^2 +
\xi_2(j)^2)^{-\beta}\mbox{fft}[V_y(x,\omega \cdot
x(i,j)+z_n(i,j))]}{1+\Delta t
(\gamma  + \xi_1(i)^2 + \xi_2(j)^2)^{1-\beta}-\gamma \Delta t
(\gamma + \xi_1(i)^2+\xi_2(j)^2)^{-\beta}}.
\end{equation}
\begin{remark}
After the $n$-th step, we have $\hat{z}_n$. In order to compute
$V_y(x,\omega \cdot x+z_n)$ we apply the inverse fast
Fourier transform to get $z_n= \mbox{ifft}[\hat{z}_n]$, which requires order
$m^2\log(m)$ operations because $z_n$ is an $m\times m$ array (or a
vector of length $m^2$).
With $z_n$ computed, we evaluate $V_y(x,\omega \cdot x+z_n)$, requiring
$m^2$ operations, and then we transform it with FFT, again
costing $\mathcal{O}(m^2\log(m))$ operations.

To compute $\hat{z}_{n+1}$, we now only need to perform component-wise
multiplication of the arrays in (\ref{NumEq}), requiring
$\mathcal{O} (m^2)$ operations. That is, we multiply the $(i,j)$-component of
$\mbox{fft}[V_y(x,\omega \cdot x(i,j)+z_n(i,j))]$ by the $(i,j)$-component
of $\Delta t (\gamma+\xi_1(i)^2 +
\xi_2(j)^2)^{-\beta}$ for each $1 \leq i,j \leq m$.
We then add to it the $(i,j)$-component of $\hat{z}_n(i,j)$
and then divide by the $(i,j)$-component of
$1+\Delta t
(\gamma  + \xi_1(i)^2 + \xi_2(j)^2)^{1-\beta}-\gamma \Delta t
(\gamma + \xi_1(i)^2+\xi_2(j)^2)^{-\beta}$.

If $\hat{z}_n$ and $\mbox{fft}[V_y(x,\omega \cdot x+z_n)]$
were represented as vectors of length $m^2$, then this same procedure
would amount to multiplying $\mbox{fft}[V_y(x,\omega \cdot x+z_n)]$
by the $m^2\times m^2$ diagonal matrix representing
$\Delta t (\gamma +\xi_1^2 + \xi_2^2)^{-\beta}$, adding $\hat{z}_n$, and
then multiplying the result by the $m^2\times m^2$ diagonal matrix
representing
$(1+\Delta t (\gamma  + \xi_1^2 + \xi_2^2)^{1-\beta}-\gamma \Delta t
(\gamma + \xi_1^2+\xi_2^2)^{-\beta})^{-1}$.
\end{remark}
\begin{remark}
  This method would also work in a setting
where the gradient part of the energy functional $S_{\ep}$
is replaced by the fractional laplacian. We could
use all of the same techniques above for solving the gradient descent for
\begin{equation} \label{EnFrac}
  S_{\ep}^{\delta}(z) = \int_{\mathbb{T}^d} z \, (-\Delta)^{\delta}z  + \ep V(x,\omega \cdot x+z) dx,
\quad \delta \in (0,1).
\end{equation}
The
critical points of (\ref{EnFrac}) will solve the Euler-Lagrange equation
\[
-(-\Delta)^{\delta}z  = \ep V_y(x,\omega \cdot x + z).
\]
Using the metric on $H^{\beta \delta}$ given by
$ \langle v ,w \rangle_{H^{\beta \delta}}=
\langle (\gamma + (-\Delta)^{\delta})^{\beta}v, w \rangle_{L^2}$,
we arrive at the descent equation
\begin{equation} \label{num_frac}
\partial_t z = -(\gamma +  (-\Delta)^{\delta})^{1-\beta}z +
(\gamma +  (-\Delta)^{\delta})^{-\beta}(\gamma z - V_y(x,\omega \cdot x+z)).
\end{equation}

Because $z$ is periodic and the operator $(-\Delta)^{\delta}$
is diagonal in the Fourier coefficients,
implementing (\ref{num_frac})
numerically is the same as described above, except with
powers of $|\xi |^{2\delta}$ in place of $|\xi |^2$.
\end{remark}
\section{Perturbation method for computing minimizers and
the jumps in $DA_{\varepsilon}(\omega)$} \label{Pert}

\subsection{Foliations and laminations of minimizers}
Following \cite{Senn95},
we define
$\Gamma_{\omega} = \{(k,l)\in \mathbb{Z}^d \times \mathbb{Z} : \bar{\omega} \cdot (k,l) = 0\}$,
where $\bar{\omega} = (\omega,-1)$,
and $\mathcal{M}(\bar{\omega}) \subset \mathcal{M}_{\omega}$, the set of maximally periodic $u\in \mathcal{M}_{\omega}$ by
\[
\mathcal{M}(\bar{\omega}) = \left\{u\in \mathcal{M}_{\omega} : u(x+k)+l =u(x), \forall (k,l)\in
\Gamma_{\omega} \right\}.
\]
For each $\omega\in \mathbb{R}^d$ the set
$\mathcal{M}(\bar{\omega})$ is closed and totally ordered.
The closedness is a classical argument (see \cite{Morse}).
The total order, meaning that if
$u$, $v \in \mathcal{M}(\bar{\omega})$ then either $u>v$ or $u<v$ or $u \equiv v$, is a
consequence of the maximum principle for elliptic partial differential
equations. Let $x = (x_1, \ldots , x_d)$, and we write $(x,x_{d+1}) \in \mathbb{R}^{d+1}$.
Let $\omega\in \mathbb{R}^d$, then the total order of $\mathcal{M}(\bar{\omega})$ means that for a given
$(x,x_{d+1}) \in \mathbb{R}^{d+1}$ there is at most one $u \in \mathcal{M}(\bar{\omega})$ such that
 $x_{d+1}=u(x)$. That is, each point in $\mathbb{R}^{d+1}$ belongs to the
graph of at most one $u \in \mathcal{M}(\bar{\omega})$. If for all $(x,x_{d+1}) \in \mathbb{R}^{d+1}$
there exists $u \in \mathcal{M}(\bar{\omega})$ with $x_{d+1}=u(x)$, then we say
$\mathcal{M}(\bar{\omega})$ is a foliation of $\mathbb{R}^{d+1}$. Because of the non-selfintersection
property (\ref{birkhoff}), such a foliation projects into $\mathbb{T}^{d+1}$.
It can happen that there are points $(x,x_{d+1}) \in \mathbb{R}^{d+1}$ for
which there does not exist any $u \in \mathcal{M}(\bar{\omega})$ with $x_{d+1}=u(x)$. In this
case we say $\mathcal{M}(\bar{\omega})$ is a lamination of $\mathbb{R}^{d+1}$ (and projects to
a lamination of $\mathbb{T}^{d+1}$). For this reason, a lamination is
sometimes referred to as a  ``foliation with gaps''.

If $\bar{\omega}$ is rationally dependent, and if $\mathcal{M}(\bar{\omega})$ defines a
lamination, then there are minimizers $u \in \mathcal{M}_{\omega}$ whose graphs
lie in the gaps of $\mathcal{M}(\bar{\omega})$, \cite{Bangert89}.
In addition, if we choose a direction
\[
\beta \in \mbox{span}_{\mathbb{R}}\{ k \in \mathbb{Z}^d : \omega\cdot k \in \mathbb{Z}\} \cap S^{d-1}
\]
then there is a $u \in \mathcal{M}_{\omega}$ such that $u$ is asymptotic to some
$u^+ \in \mathcal{M}(\bar{\omega})$ in the direction $\beta$ and asymptotic to some
$u^- \in \mathcal{M}(\bar{\omega}) $ in the direction $-\beta$. For details of the
asymptotic behavior, see \cite{Senn95}, page 350. Such a $u$ is
said to be \emph{heteroclinic} in the direction $\beta$. A formula
for the one-sided directional derivative of $A_{\varepsilon}$ at
a point $\omega$ is given on
page 356 of \cite{Senn95}, and involves integrating the action over the
gaps defined by the elements of $\mathcal{M}(\bar{\omega})$ and the heteroclinics between them.
We will use perturbation methods to calculate the gap borders
and the heteroclinics lying in the gaps.

\subsection{Lindstedt series for solutions}
We seek \emph{plane-like} solutions $u_{\ep}(x)$ of
\begin{equation} \label{MainEq}
\Delta u = \ep V_y(x,u)
\end{equation}
that can be expanded as $u_{\ep}(x) = u_0 + \ep u_1 + \ep^2 u_2 + \ldots$.
The series $\sum_{j\geq 0}\ep^ju_j$ shall be referred to as the
\emph{Lindstedt series} for the solution $u_{\ep}$.
Substituting the series into equation (\ref{MainEq}) and matching powers
we arrive at the following equations for each order of $\ep$
\begin{equation}\label{eqnList} \begin{split}
\ep^0: \quad \Delta u_0 & = 0 \\
\ep^1: \quad \Delta u_1 & = V_y (x,u_0)\\
\ep^2: \quad \Delta u_2 & = V_{yy}(x,u_0)u_1(x)\\
\ep^3: \quad \Delta u_3 & = V_{yy}(x,u_0)u_2(x) +
\frac{1}{2}V_{yyy}(x,u_0)u_1^2(x)\\
\ep^4: \quad \Delta u_4 & = V_{yy}(x,u_0)u_3(x) + V_{yyy}(x,u_0)u_1(x)u_2(x)
+\frac{1}{3!}V_{yyyy}(x,u_0)u_1^3(x)\\
&\vdots
\end{split}\end{equation}
In order that $u_{\ep}$ be a plane-like solution, we require $u_0$ to be
affine and $u_j$ be periodic for each $j>1$.
Using the notation $[\cdot]_j$ to refer to the $j$-th coefficient of the
power series in $\ep$, we write
\begin{displaymath}
V_y(x,u_{\ep}) = V_y(x,u_0+\ep u_1 + \ldots)= \sum_{j\geq 0}
[V_y(x,u_{\ep})]_{j}\ep^j.
\end{displaymath}
We will also write $u^{<j}$ for the first $j$ terms in the Lindstedt series:
$u^{<j} = u_0 + \ldots + \ep^{j-1}u_{j-1}$.
The $j$-th order equation in the list (\ref{eqnList}) has the form
\begin{equation} \label{Lind}
\ep^j: \quad \Delta u_j = [V_y(x,u)]_{j-1} =  [V_y(x,u^{<j})]_{j-1}.
\end{equation}

The zeroth order equation is satisfied by \emph{any} affine function, so
we take $u_0 = \omega \cdot x + \alpha$, and at this point we are
free to choose $\alpha$ as we like.
To solve the $j$-th order equation, we must have that
$\int_{\mathbb{T}^d} [V_y(x,u^{<j})]_{j-1}dx=0$. This compatibility condition is
what forces specific choices of $\alpha$ once $\omega$ has been fixed.

We also note that the solution of (\ref{Lind}) is determined
only up to an additive constant, which will be chosen so that the
equation of the following step has a solution. That is,
the average of $u_j$ is chosen so that equation for $u_{j+1}$
is solvable.

\subsection{Existence of the Lindstedt series to all orders} \label{exist}
We consider $\omega\in \frac{1}{N}\mathbb{Z}^d$ fixed, and seek $u_{\ep}$ solving
(\ref{MainEq}) such that $u_{\ep} (x)- \omega \cdot x \in L^{\infty}(N\mathbb{T}^d)$.
We must have $u_0(x) = \omega \cdot x +\alpha$ to solve $\Delta u_0 = 0$. The
choice of $\alpha$ is free, and each value of $\alpha \in [0,1)$
will result in a different set of equations for the $u_j$ with $j \geq 1$.
In this section, we show that there are at least two choices of
$\alpha \in [0,1)$
such that (\ref{Lind}) has a solution for each $j \geq 0$.
\begin{lemma} \label{lem1}
For fixed $\omega\in \frac{1}{N}\mathbb{Z}^d$, there are at least two choices of
$\alpha \in [0,1)$ such that
$\int_{N\mathbb{T}^d}V_y(x,\omega \cdot x + \alpha) \, dx = 0$.
\end{lemma}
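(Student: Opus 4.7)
My plan is to reduce the lemma to the classical fact that any smooth periodic function on the circle admits at least two critical points (a maximum and a minimum). To this end I would introduce the scalar auxiliary function
\[
W(\alpha) = \int_{N\mathbb{T}^d} V(x,\omega \cdot x + \alpha)\, dx,
\]
so that the integral condition of the lemma becomes exactly $W'(\alpha) = 0$. Differentiation under the integral sign is legitimate because $V$ is analytic and the domain of integration is compact, and it yields $W'(\alpha) = \int_{N\mathbb{T}^d} V_y(x,\omega \cdot x + \alpha)\, dx$ as required.

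Before applying the circle argument I would verify that $W$ is genuinely a function on $\mathbb{R}/\mathbb{Z}$. The integrand is $N\mathbb{Z}^d$-periodic in $x$: translating $x$ by $Ne_j$ shifts $\omega \cdot x$ by $N\omega \cdot e_j \in \mathbb{Z}$ (since $\omega \in \frac{1}{N}\mathbb{Z}^d$), and the joint $(\mathbb{Z}^d \times \mathbb{Z})$-periodicity of $V$ keeps the integrand invariant, so the integral over $N\mathbb{T}^d$ is well defined. Periodicity in $\alpha$ follows from $V(x,y+1)=V(x,y)$, which gives $W(\alpha+1)=W(\alpha)$; hence $W$ descends to a smooth (in fact analytic) function on $\mathbb{R}/\mathbb{Z}$.

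To conclude, I would apply the extreme value theorem on the compact circle: $W$ attains its supremum and its infimum, and at both points $W'$ vanishes. If $W$ is non-constant, the maximum and minimum are attained at two distinct points of $\mathbb{R}/\mathbb{Z}$, which lift uniquely to two distinct representatives in the fundamental domain $[0,1)$. If $W$ happens to be constant, then $W'\equiv 0$ and every $\alpha\in[0,1)$ is a valid choice. In either case the lemma follows. There is no real obstacle here: the argument rests entirely on periodicity together with the extreme value theorem, and the only point requiring slight attention is that the two critical points of $W$ on the circle correspond to genuinely distinct values in $[0,1)$, which is immediate because $[0,1)$ is a bijective set of representatives for $\mathbb{R}/\mathbb{Z}$.
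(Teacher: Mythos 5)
Your proof is correct. It takes a slightly different route from the paper's: the paper works directly with the function $\Phi_1(\alpha)=\int_{N\mathbb{T}^d} V_y(x,\omega\cdot x+\alpha)\,dx$, computes $\int_0^1\Phi_1(\alpha)\,d\alpha=0$ using the fundamental theorem of calculus and the $y$-periodicity of $V$, and then appeals to continuity and $1$-periodicity of $\Phi_1$ to deduce at least two zeros. You instead pass to the primitive $W(\alpha)=\int_{N\mathbb{T}^d} V(x,\omega\cdot x+\alpha)\,dx$, observe that $W$ descends to a smooth function on $\mathbb{R}/\mathbb{Z}$, and invoke the extreme value theorem to get two distinct critical points (or a constant $W$). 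The two arguments are two sides of the same coin, since $\int_0^1\Phi_1=W(1)-W(0)=0$, but your version is arguably a bit crisper at the final step: the paper's claim that a continuous periodic function with zero mean has at least two zeros is true but stated tersely, whereas the max/min argument makes the count of two explicit and also cleanly handles the degenerate case $\Phi_1\equiv 0$. You also verify the $N\mathbb{Z}^d$-periodicity of $x\mapsto V(x,\omega\cdot x+\alpha)$ (using $\omega\in\frac{1}{N}\mathbb{Z}^d$), which the paper leaves implicit. Both proofs are equally valid; yours adds a small amount of rigor at the cost of introducing an auxiliary primitive.
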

\begin{proof}
Define $\Phi_1 : \mathbb{R} \to \mathbb{R}$ by
$\Phi_1 (\alpha) = \int_{N\mathbb{T}^d}V_y(x,\omega \cdot x + \alpha) \, dx$.  Then $\Phi_1$ is
continuous, and we have
\begin{align*}
\int_0^1 \Phi_1 (\alpha) \, d\alpha & =
\int_0^1\int_{N\mathbb{T}^d}V_y(x,\omega \cdot x + \alpha) \, dx d\alpha \\
& = \int_{N\mathbb{T}^d}\int_0^1\frac{\partial \phantom{u}}{\partial \alpha}
V(x,\omega \cdot x + \alpha) \, d\alpha dx \\
& = \int_{N\mathbb{T}^d} V(x,\omega \cdot x ) - V(x,\omega \cdot x + 1) \, dx = 0.
\end{align*}
Thus, $\Phi_1$ must have a zero in $[0,1)$, and since
$\Phi_1 (\alpha + 1) = \Phi_1 (\alpha)$, by the periodicity of $V$,
we know that $\Phi_1$ must have at least two zeros.
\end{proof}

For any such choice
of $\alpha$, $\int_{N\mathbb{T}^d}V_y(x,u_0) \, dx = 0$, and
therefore there exists a family of periodic solutions,
$u_1(x) = u_1^*(x) + \lambda$, of
$\Delta u_1 = V_y(x,u_0)$, differing only by an additive constant.
We will write $u_1^*$ for the member of the family with average zero.
\begin{theorem}
Let $u_0 = \omega \cdot x + \alpha$, with $\omega\in \frac{1}{N}\mathbb{Z}^d$ and $\Phi_1 (\alpha)=0$. If
\begin{equation} \label{twist}
\int_{N\mathbb{T}^d} V_{yy}(x, u_0) \, dx \ne 0
\end{equation}
then each equation
$\Delta u_j = \left[ V_y(x,u^{<j} ) \right]_{j-1}$ has a solution for
all $j \geq 1$.
\end{theorem}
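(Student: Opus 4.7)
The plan is to proceed by induction on $j$, exploiting the fact that the Poisson equation $\Delta u_j = g_j$ on $N\mathbb{T}^d$ is solvable precisely when $g_j$ has zero average, and using the free additive constant in $u_{j-1}$ (left undetermined by the previous step) to enforce the compatibility condition that arises at step $j$. Thus the indexing of the bookkeeping is shifted by one: solving for $u_j$ forces us to fix the constant $\lambda_{j-1}$ in $u_{j-1} = u_{j-1}^* + \lambda_{j-1}$, leaving a new free constant $\lambda_j$ to be pinned down at the following order.

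The base case $j=1$ is exactly Lemma \ref{lem1}: the hypothesis $\Phi_1(\alpha)=0$ yields $\int_{N\mathbb{T}^d} V_y(x,u_0)\,dx = 0$, so $\Delta u_1 = V_y(x,u_0)$ admits a periodic solution $u_1 = u_1^* + \lambda_1$ with $\lambda_1$ arbitrary. For the inductive step, assume $u_0,\ldots,u_{j-2}$ are fully determined and $u_{j-1} = u_{j-1}^* + \lambda_{j-1}$ is known up to $\lambda_{j-1}$. The key algebraic observation is the decomposition
\[
\bigl[V_y(x,u^{<j})\bigr]_{j-1} \;=\; V_{yy}(x,u_0)\,u_{j-1} \;+\; R_j,
\]
where $R_j$ depends only on $u_0,u_1,\ldots,u_{j-2}$ and on derivatives of $V$ at $u_0$. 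This follows from Taylor-expanding $V_y(x,u_0 + h)$ with $h = \ep u_1 + \cdots + \ep^{j-1}u_{j-1}$: the linear term $V_{yy}(x,u_0)\,h$ contributes $V_{yy}(x,u_0)\,u_{j-1}$ to the $\ep^{j-1}$ coefficient, while any higher power $h^k$ with $k\ge 2$ contributes monomials $u_{a_1}\cdots u_{a_k}$ with $a_i\ge 1$ and $\sum a_i = j-1$, which forces each $a_i \le j-2$.

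Imposing the compatibility condition $\int_{N\mathbb{T}^d} [V_y(x,u^{<j})]_{j-1}\,dx = 0$ then produces a scalar linear equation for $\lambda_{j-1}$, namely
\[
\lambda_{j-1}\int_{N\mathbb{T}^d} V_{yy}(x,u_0)\,dx \;=\; -\int_{N\mathbb{T}^d}\bigl(V_{yy}(x,u_0)\,u_{j-1}^* + R_j\bigr)\,dx.
\]
The twist hypothesis (\ref{twist}) is precisely what guarantees the coefficient on the left is nonzero, uniquely determining $\lambda_{j-1}$. With $u_{j-1}$ now fully specified, standard Fourier analysis on $N\mathbb{T}^d$ (alternatively, Fredholm theory for $\Delta$) produces a periodic solution $u_j = u_j^* + \lambda_j$, and the induction continues.

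The main obstacle, and really the only non-routine point, is verifying the decomposition of $[V_y(x,u^{<j})]_{j-1}$ so that $\lambda_{j-1}$ enters the compatibility condition only through the single linear term $\int V_{yy}(x,u_0)\,dx\cdot\lambda_{j-1}$; once this is established, the twist condition reduces each inductive step to solving a scalar linear equation plus a Poisson problem with zero-average right-hand side, both trivially solvable.
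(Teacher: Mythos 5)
Your proof is correct and follows essentially the same strategy as the paper: the decomposition $[V_y(x,u^{<j})]_{j-1} = V_{yy}(x,u_0)u_{j-1} + R_j(u_1,\ldots,u_{j-2})$ together with the twist condition (\ref{twist}) yields a scalar linear equation for the free constant $\lambda_{j-1}$, ensuring the right-hand side has zero average so that the Poisson problem for $u_j$ is solvable. The only addition is that you spell out the combinatorial reason (monomials $u_{a_1}\cdots u_{a_k}$ with $k\ge 2$, $a_i\ge 1$, $\sum a_i = j-1$ force $a_i\le j-2$) that $R_j$ does not involve $u_{j-1}$, which the paper asserts without proof.
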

\begin{proof}
From Lemma \ref{lem1}, we have a family of solutions
$u_1 = u_1^* + \lambda$, of $\Delta u_1 = V_y(x,u_0)$.
If we set
\[
\lambda^{(1)} = -\frac{\int_{N\mathbb{T}^d}V_{yy}(x,u_0)u_1^* \,
dx }{\int_{N\mathbb{T}^d}V_{yy}(x,u_0) \, dx}
\]
then, $\int_{N\mathbb{T}^d} \left[ V_y(x,u^{<2} ) \right]_{1}\, dx =
\int_{N\mathbb{T}^d}V_{yy}(x,u_0)(u_1^*+\lambda ^{(1)}) \, dx =0$, so the equation
$\Delta u_2 = V_{yy}(x,u_0)u_1$ is solvable for $u_2 = u_2^* + \lambda$
when we choose $u_1(x) = u_1^*(x) + \lambda ^{(1)}$.
To continue inductively, we note that for each $j \geq 1$,
\[
\left[ V_y(x,u^{<j} ) \right]_{j-1} = V_{yy}(x,u_0)u_{j-1} + R_j(u_1,
\ldots,u_{j-2}).
\]

Suppose that we have solutions of (\ref{Lind}) for $j=1, \ldots , n$,
and the
constants $\lambda ^{(1)}, \ldots, \lambda ^{(n-1)}$ have been selected
so that $u_j(x) = u_j^*(x)+\lambda ^{(j)}$.
We choose
\[
\lambda ^{(n)} =  -\frac{\int_{N\mathbb{T}^d}V_{yy}(x,u_0)u_n^* + R_n(u_1,\ldots ,
 u_{n-1})\, dx  }{\int_{N\mathbb{T}^d}V_{yy}(x,u_0) \, dx}
\]
so that
\[
\int_{N\mathbb{T}^d} \left[ V_y(x,u^{<n+1} ) \right]_{n} dx =
\int_{N\mathbb{T}^d} V_{yy}(x,u_0)(u_n^*+\lambda ^{(n)}) + R_n(u_1,\ldots,u_{n-1}) dx = 0.
\]
Thus $\Delta u_{n+1} = \left[ V_y(x,u^{<n+1} ) \right]_{n}$ has a family of
solutions, $u_{n+1}(x)= u_{n+1}^*(x)+\lambda$, completing the induction.
\end{proof}
\subsection{Convergence of the Lindstedt series} \label{convergence}
We use a Newton
method to produce a sequence of functions $U_n(x,\ep)$ that are
analytic in $\ep$, and converge uniformly to a solution of
$-\Delta u_{\ep} + \ep V_y(x,u_{\ep})=0$ for
$\ep \in B_{\delta}(0) \subset \mathbb{C}$ for small enough $\delta >0$.
Thus, we produce
an $\ep$-analytic function, $u_{\ep}$, solving (\ref{MainEq})
for $\ep \in B_{\delta}(0)$, so the Taylor series of $u_{\ep}$
must coincide with the Lindstedt series, proving
the convergence of the Lindstedt series. For similar convergence results,
 but for the case of
Diophantine frequencies, see \cite{Chierchia}.

\begin{lemma} \label{lem:analytic}
  Let $u_j\in H^{m+2}(\mathbb{T}^d)$ and $\ep \in \mathbb{C}$ and define
$u(\ep,x) = \sum_{j\in \mathbb{N}}\ep^j u_j(x)$, where
the series is convergent in $H^{m+2}$ for $|\ep|<r$, for some $r>0$
and  $m>d/2$.
Define $F:\mathbb{C}\to H^{m}(\mathbb{T}^d)$ by
\[
F(\ep;u) = \Delta u(\ep,x) + \ep V(x,u(\ep,x))
\]
Then the derivative of $F$ with respect to $\ep$ is
\[
D_{\ep} F(\ep;u) = \Delta D_{\ep} u(\ep,x) + V(x,u(\ep,x)) + \ep
V_y(x,u(\ep,x))D_{\ep} u(\ep,x).
\]
\end{lemma}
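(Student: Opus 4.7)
The plan is to treat $F(\ep;u)$ as a composition of analytic Banach-space-valued maps, so that the stated formula becomes a routine application of the chain and product rules. Since $m>d/2$, Sobolev embedding gives $H^{m+2}(\mathbb{T}^d) \hookrightarrow H^m(\mathbb{T}^d) \hookrightarrow C^0(\mathbb{T}^d)$, and $H^m(\mathbb{T}^d)$ is a Banach algebra under pointwise multiplication; I will use these facts throughout.

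First, by the standard theory of Banach-valued power series, the series $u(\ep,\cdot)=\sum_{j\ge 0}\ep^j u_j$ converges absolutely in $H^{m+2}$ on $\{|\ep|<r\}$ and defines an analytic map from this disk into $H^{m+2}$, with derivative given by termwise differentiation, $D_\ep u(\ep,\cdot)=\sum_{j\ge 1} j\ep^{j-1} u_j$, which has the same radius of convergence. Because $\Delta\colon H^{m+2}\to H^m$ is bounded and linear, the map $\ep\mapsto \Delta u(\ep,\cdot)$ is analytic into $H^m$ with derivative $\Delta D_\ep u(\ep,\cdot)$, which accounts for the first term in the claimed formula.

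The substantive step is to show that $W(\ep) := V(\cdot, u(\ep,\cdot))$ defines an analytic map $\{|\ep|<r\}\to H^m$ with $D_\ep W(\ep)=V_y(\cdot,u(\ep,\cdot))\, D_\ep u(\ep,\cdot)$, the product being taken in the $H^m$-algebra. Fix $\ep_0$ with $|\ep_0|<r$. By the Sobolev embedding, $u(\ep,\cdot)$ takes values in a fixed bounded subset of $\mathbb{R}$ uniformly in $x$ and locally in $\ep$, so the analyticity of $V$ in its second variable together with its periodicity allow the Taylor expansion
\[
V(x,u(\ep,x))=\sum_{k\ge 0}\tfrac{1}{k!}\,\partial_y^k V(x,u(\ep_0,x))\,\bigl(u(\ep,x)-u(\ep_0,x)\bigr)^k,
\]
valid for $\ep$ sufficiently close to $\ep_0$. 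Each coefficient $\partial_y^k V(\cdot,u(\ep_0,\cdot))$ lies in $H^m$ by the Nemytskii composition property on Sobolev algebras, the powers $(u(\ep,\cdot)-u(\ep_0,\cdot))^k$ have $H^m$-norm controlled by $\|u(\ep,\cdot)-u(\ep_0,\cdot)\|_{H^m}^k$ via the algebra estimate, and analytic control on the size of $\partial_y^k V$ makes the series converge absolutely in $H^m$ for $|\ep-\ep_0|$ small. This proves that $W$ is analytic at $\ep_0$ and identifies its derivative with the term of order one in $(u(\ep,\cdot)-u(\ep_0,\cdot))$.

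The proof concludes by the product rule applied to $\ep\mapsto \ep\,W(\ep)$, giving $W(\ep)+\ep\,D_\ep W(\ep)$, and adding the Laplacian contribution to obtain the stated formula. The main obstacle is the third paragraph: establishing that the Nemytskii operator $u\mapsto V(\cdot,u(\cdot))$ is an analytic map $H^{m+2}\to H^m$ with its naive Fréchet derivative. This is a standard fact when the symbol $V$ is analytic and $H^m$ is an algebra, and it is the only place where the hypothesis $m>d/2$ is essential; once it is secured, the identity for $D_\ep F$ is immediate from the chain and product rules in Banach spaces.
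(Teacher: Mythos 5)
Your proof is correct, but it takes a genuinely different route from the paper's. The paper argues directly: it writes down the candidate first derivative $G(\ep;u)$ and second derivative $H(\ep;u)$ explicitly, uses the integral form of Taylor's theorem to express $F(\ep+h;u)-F(\ep;u)-G(\ep;u)h$ as a double integral of $H$, and then bounds $\|H\|_{H^m}$ via the Gagliardo--Nirenberg inequality (the Sobolev algebra property, which is where $m>d/2$ enters) to conclude that the remainder is $O(|h|^2)$. That is a self-contained second-order Taylor estimate that never invokes Nemytskii operator theory as a black box. Your approach instead decomposes $F$ into a bounded linear piece $\Delta u(\ep,\cdot)$ and a superposition piece $\ep V(\cdot,u(\ep,\cdot))$, and reduces the claim to the analyticity of the Nemytskii map $w\mapsto V(\cdot,w)$ on the Sobolev algebra $H^m$, after which the formula is just the chain and product rules in Banach spaces. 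The two arguments engage the same hypothesis $m>d/2$ in the same role, but differ in emphasis: your version proves more --- full holomorphy of $\ep\mapsto F(\ep;u)$, not just the existence of the first derivative --- at the price of citing the Nemytskii analyticity result, and is somewhat less explicit at the one point you flag as the ``main obstacle.'' To be fully rigorous there you would want to record the quantitative estimate $\|\partial_y^k V(\cdot,u(\ep_0,\cdot))\|_{H^m}\le C\,(k+m)!\,\rho^{-k}$ coming from the analyticity and periodicity of $V$ together with standard Moser-type composition bounds in $H^m$; the factorial is absorbed by the $1/k!$ up to a polynomial factor, so the series converges in $H^m$ for $\|u(\ep,\cdot)-u(\ep_0,\cdot)\|_{H^m}$ small, which in turn holds for $|\ep-\ep_0|$ small by the convergence of $\sum_j \ep^j u_j$ in $H^{m+2}$. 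With that estimate filled in, your proof stands, and the extra conclusion (analyticity of $F$) is actually convenient for the subsequent Newton iteration argument in Proposition \ref{prop:convergence}, where $\ep$-analyticity of the iterates is used explicitly.
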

\begin{proof}
We define
\begin{align*}
G(\ep ;u) &= \Delta D_{\ep} u(\ep,x) + V(x,u(\ep,x)) + \ep
V_y(x,u(\ep,x))D_{\ep} u(\ep,x)  \\
H(\ep ;u) &= \Delta D_{\ep}^2u(\ep ,x) + 2V_y(x,u(\ep,x))D_{\ep} u(\ep,x)\\
& \qquad + \ep V_y(x,u(\ep,x))D_{\ep}^2u(\ep,x)
 + \, \ep V_{yy}(x,u(\ep,x))
\left( D_{\ep} u(\ep,x)\right)^2.
\end{align*}
We have
\begin{align*}
  & \int_0^1 H(\ep + \sigma \tau h;u)\tau h \, d\sigma = G(\ep + \tau h;u)
- G(\ep ;u) \\
  & \int_0^1 G(\ep + \tau h;u) h \, d\tau = F(\ep + h;u) - F(\ep ;u)
\end{align*}
so that
\begin{align*}
  F(\ep + h;u) - F(\ep ;u) -G(\ep;u)h =  \int_0^1 \int_0^1 H(\ep + \sigma
 \tau h;u)\tau h^2 \, d\sigma \, d\tau
\end{align*}
and
\begin{align*}
  \left\| H(\ep;u) \right\|_{H^m} & \leq \left\| D_{\ep}^2u(\ep ,x)\right\|_{H^{m+2}}
+ 2 \left| V_y \right|_{C^0} \left\| D_{\ep} u(\ep,x) \right\|_{H^m} \\
&\qquad + |\ep| \left| V_y \right|_{C^0} \left\| D_{\ep}^2u(\ep,x)  \right\|_{H^m}  +
|\ep| \left|V_{yy}\right|_{C^0} \left\| \left( D_{\ep} u(\ep,x)\right)^2  \right\|_{H^m}.
\end{align*}
From the Gagliardo-Nirenberg inequality \cite{Nirenberg59}, we have that
$\left( D_{\ep} u(\ep,x)\right)^2 \in H^m$ because $m>d/2$,
and that there is a constant depending on $m$ and $d$, such that
 $\left\| \left( D_{\ep} u(\ep,x)\right)^2  \right\|_{H^m} \leq C(m,d)
\left\| D_{\ep} u(\ep,x) \right\|_{H^m}^2$. Hence,
\begin{align*}
  \left\|F(\ep + h;u) - F(\ep ;u)-G(\ep;u)h\right\|_{H^m} &\leq \int_0^1 \int_0^1
\left\|H(\ep + \sigma  \tau h;u)\right\|_{H^m}\tau |h|^2 \, d\sigma \, d\tau \\
& \leq C\left( m,d,\|u\|_{m+2},|V|_{C^2},\ep\right)|h|^2.
\end{align*}
Thus, $F$ is differentiable and $D_{\ep} F(\ep;u) = G(\ep;u)$
\end{proof}
We define $F_{\ep}(u) = -\Delta u + \ep V_y(x,u)$.
It was shown in the Section \ref{exist} that for any
fixed $M\in \mathbb{N}$, we can solve the first $M$ equations from
(\ref{eqnList}) for $u^{<M}_{\ep}$,
and that $u^{<M}_{\ep}$ will solve (\ref{MainEq})
up to order $\ep^{M}$.
That is, $F_{\ep}(u^{< M}_{\ep}) = \mathcal{O} (\ep^{M})$.
We set $U_0 = u_{\ep}^{< M}$ for a sufficiently
large $M$ to be determined later, and for $j \geq 1$ we define
\begin{equation} \label{NewtonMethod}
U_{n+1} = U_n - DF_{\ep}(U_n)^{-1} F(U_n).
\end{equation}

Let $m>d/2$ be fixed, and note that for any $M>0$, $u^{<M}_{\ep} \in H^m$ by
the regularity theory for elliptic PDEs.
If $U_n(x,\ep)$ is analytic in $\ep$ and is $H^{m+2}$ in $x$, then
$F_{\ep}(U_n) = -\Delta U_n + \ep V_y(x,U_n)$ is analytic in $\ep$ and $H^m$
in $x$ by the result in Lemma \ref{lem:analytic}.
We have $DF_{\ep}(U_n) \eta = -\Delta \eta +\ep V_{yy}(x,U_n)\eta$, and we
need to consider carefully the behavior of $DF_{\ep}(U_n)^{-1}$.
To simplify notation, we define $L_{\ep}^n : H^{m+2}(N\mathbb{T}^d) \to H^m(N\mathbb{T}^d)$ as
\[
L_{\ep}^n = DF_{\ep}(U_n) = -\Delta +\ep V_{yy}(x,U_n).
\]
$L_{\ep}^n$ is a small
perturbation of $-\Delta : H^{m+2} \to H^m$. Now, $-\Delta$ maps the
codimension one subspace $H^{m+2}/\mathbb{R}$ of its domain to the codimension one
subspace $H^m/\mathbb{R}$ of its range in a bounded, invertible
way. But it has the simple eigenvalue $\lambda = 0$,
with eigenspace spanned by the constant functions.

\begin{lemma} \label{bound}
Let $P_0:H^m \to H^m/\mathbb{R}$ be the orthogonal projection onto $H^m$ functions
with zero average. That is, if $f\in H^m$, then
$P_0f = f - \fint_{N\mathbb{T}^d} f\, dx$. Let $-\Delta_0:H^{m+2}/\mathbb{R} \to H^m/\mathbb{R}$ be
the restriction of the laplacian, and let $Y$ be the image of  $H^{m+2}/\mathbb{R}$ under
$-\Delta_0 + \ep V_{yy}(x,U_n)$.
Suppose there are constants $q, c_1, \delta_0 > 0$ such that
\begin{equation}\label{degen}
\langle \ep V_{yy}(x,U_n), g \rangle_{H^m} \geq c_1|\ep |^q,
\quad \forall \, g \in Y^{\perp}, \, \|g\|_{H^m}=1
\end{equation}
for $|\ep| < \delta_0$.
Then $\| (L_{\ep}^n)^{-1} \|_{\mathcal{L}(H^m)} \leq c\ep^{-q}$, for some $c>0$.
\end{lemma}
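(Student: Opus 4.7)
The strategy is block inversion with respect to the two orthogonal decompositions $H^{m+2}(N\mathbb{T}^d) = (H^{m+2}/\mathbb{R})\oplus \mathbb{R}$ of the domain and $H^m(N\mathbb{T}^d) = Y \oplus Y^{\perp}$ of the codomain. Writing $u = u^* + c$ with $u^*$ of zero mean and $c\in \mathbb{R}$,
\begin{equation*}
L_{\ep}^n(u^* + c) \;=\; A_{\ep}u^* \;+\; \ep c\, P_Y V_{yy}(x,U_n) \;+\; \ep c\, P_{Y^{\perp}} V_{yy}(x,U_n),
\end{equation*}
where $A_{\ep} := L_{\ep}^n|_{H^{m+2}/\mathbb{R}}$ has image $Y$ by definition and the last summand lies in $Y^{\perp}$. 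Thus $L_{\ep}^n$ is block-upper-triangular: the $Y^{\perp}$-component of $L_{\ep}^n u$ is driven only by the single constant $c$, and this scalar block is where the $|\ep|^q$ degeneracy lives.

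For the block $A_{\ep} : H^{m+2}/\mathbb{R}\to Y$, surjectivity is automatic. To invert with an $\ep$-independent bound, compose with the projection $P_0$ from the statement: $P_0 A_{\ep} = -\Delta_0 + \ep P_0 M_{V_{yy}(x,U_n)} : H^{m+2}/\mathbb{R}\to H^m/\mathbb{R}$. Since $-\Delta_0$ is an isomorphism between these spaces and multiplication by $V_{yy}(x,U_n)$ is bounded on $H^m$ (using $m>d/2$ and the algebra property already invoked in Lemma \ref{lem:analytic}), the perturbation has operator norm $O(|\ep|)$, and a Neumann series gives $\|(P_0 A_{\ep})^{-1}\|\leq C$ uniformly for small $\ep$. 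For any $g\in Y$, applying $P_0$ to $A_{\ep}u^* = g$ forces $u^* = (P_0 A_{\ep})^{-1} P_0 g$, so $\|u^*\|_{H^{m+2}}\leq C\|g\|_{H^m}$.

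For the scalar block $\mathbb{R}\ni c\mapsto \ep c\,P_{Y^{\perp}}V_{yy}(x,U_n)\in Y^{\perp}$, note that $Y^{\perp}$ is one-dimensional since, for $\ep$ small, $Y$ is a small perturbation of the codimension-one subspace $H^m/\mathbb{R}$. Let $g_0$ be a unit vector spanning $Y^{\perp}$; the block is multiplication by the scalar $\ep\langle V_{yy}(x,U_n),g_0\rangle_{H^m}$, and hypothesis (\ref{degen}) with $g=g_0$ gives $|\ep\langle V_{yy}(x,U_n),g_0\rangle_{H^m}|\geq c_1|\ep|^q$. Hence this block is invertible with inverse norm at most $(c_1|\ep|^q)^{-1}$.

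Given $f = f_Y + f_{\perp}\in H^m$, back-substitute in the triangular system: first solve $\ep c\,P_{Y^{\perp}}V_{yy}(x,U_n)=f_{\perp}$, producing $|c|\leq C|\ep|^{-q}\|f\|_{H^m}$; then solve $A_{\ep}u^* = f_Y - \ep c\,P_Y V_{yy}(x,U_n)$, whose right-hand side has $H^m$-norm at most $C(1+|\ep|^{1-q})\|f\|_{H^m}$, yielding $\|u^*\|_{H^{m+2}}\leq C|\ep|^{-q}\|f\|_{H^m}$ since $|\ep|^{1-q}\leq |\ep|^{-q}$ for $|\ep|\leq 1$. Adding the two contributions and using the embedding $H^{m+2}\hookrightarrow H^m$ gives the desired $\|(L_{\ep}^n)^{-1}\|_{\mathcal{L}(H^m)}\leq c|\ep|^{-q}$. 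The main subtlety is purely bookkeeping: the triangular structure ensures that the degeneracy quantified by (\ref{degen}) enters only as a single scalar small divisor in the constant direction, allowing the $|\ep|^{-q}$ blow-up to propagate in a controlled way to the full inverse while the $H^{m+2}/\mathbb{R}\to Y$ block remains uniformly bounded.
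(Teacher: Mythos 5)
Your proof is correct and uses essentially the same approach as the paper's: decompose the domain as $(H^{m+2}/\mathbb{R})\oplus\mathbb{R}$ and the codomain as $Y\oplus Y^{\perp}$, observe that $L_{\ep}^n$ is block-triangular with the $\ep$-degeneracy confined to the scalar $\mathbb{R}\to Y^{\perp}$ block, invert that block using (\ref{degen}), invert the $H^{m+2}/\mathbb{R}\to Y$ block uniformly (the paper via the bounded-below operator $Q=-\Delta_0+P_0\ep V_{yy}$, you via the invertibility of $P_0 A_{\ep}$ by Neumann series, which is the same thing written slightly more explicitly), and then back-substitute. The only difference is cosmetic bookkeeping.
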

\begin{proof}
Let $Q = -\Delta_0 + P_0\ep V_{yy}(x,U_n)$, then
 $Q:H^{m+2}/\mathbb{R} \to H^m/\mathbb{R}$, and for small $\ep$ will have
a bounded inverse, $Q^{-1}:H^m/\mathbb{R} \to H^{m+2}/\mathbb{R}$.

We have that
$-\Delta_0 + \ep V_{yy}(x,U_n)= -\Delta_0 + P_0\ep V_{yy}(x,U_n)
+P_0^{\perp}\ep V_{yy}(x,U_n)$ maps $H^{m+2}/\mathbb{R}$ into $H^m$ and is a small
perturbation of $Q$, and there is a $c_2 >0$ such that
$\|-\Delta_0 \eta+ \ep V_{yy}(x,U_n)\eta\|_{H^m} \geq c_2\|\eta\|_{H^{m+2}}$
for all $\eta \in H^{m+2}/\mathbb{R}$.
Thus, $Y$ is a codimension one linear space
isomorphic to $H^m/\mathbb{R}$ lying inside $H^m$, and $Y^{\perp} = \{g \in H^m :
\int_{N\mathbb{T}^d}(-\Delta_0 \eta + \ep V_{yy}(x,U_n)\eta)g\, dx =0,\forall
\eta \in H^{m+2}/\mathbb{R} \} $ is one dimensional.

The condition (\ref{degen})
implies that the image of constant functions under $L_{\ep}^n$ does
not lie entirely in $Y$, but has a component in the $Y^{\perp}$
direction. Thus the image of $H^{m+2}$ under $L_{\ep}^n$ is $H^m$, and
for small $\ep$, $L_{\ep}^n$ will be invertible.

Let $P_Y$ and $P_Y^{\perp}$
denote the orthogonal projections of $H^m$ onto $Y$ and $Y^{\perp}$.
We let $g \in Y^{\perp}$ be a unit vector such that
$P_Y^{\perp}\xi = \langle \xi, g \rangle_{H^m} g$ for $\xi \in H^m$,
which is possible because $Y^{\perp}$ is one dimensional.
Let $\xi \in H^m$, so $\xi = L_{\ep}^n \eta$ for some
$\eta \in H^{m+2} $. We write $\eta = \eta_1+ \eta_0$ with
$\eta_1 \in H^{m+2}/\mathbb{R} $,
and $ \eta_0 \in \mathbb{R}$, and  $\xi = \xi_Y + \xi_{\perp}$
with  $\xi_y \in Y$ and $\xi_{\perp} \in Y^{\perp}$. Writing
$\xi$ in terms of $\eta_1 , \, \eta_0$ we have
\begin{align*}
 \Delta \eta + \ep V_{yy}(x,U_n)\eta &=
 \Delta_0 \eta_1 + \ep V_{yy}(x,U_n)\eta_1 + \ep V_{yy}(x,U_n)\eta_0 \\
 & =\Delta_0 \eta_1 + \ep V_{yy}(x,U_n)\eta_1 + P_Y\ep V_{yy}(x,U_n)\eta_0 +
P_Y^{\perp}\ep V_{yy}(x,U_n)\eta_0 .
\end{align*}
The term $\Delta_0 \eta_1 + \ep V_{yy}(x,U_n)\eta_1 +
P_Y\ep V_{yy}(x,U_n)\eta_0 \in Y$, so the component of $L_{\ep}^n \eta$
in $Y^{\perp}$ is $\xi_{\perp}=\langle L_{\ep}^n \eta ,g\rangle_{H^m} g
= \langle \ep V_{yy}(x,U_n)\eta_0 , g \rangle_{H^m} g $.
Thus, $\eta_0 = \xi_{\perp}(\langle \ep V_{yy}(x,U_n) , g \rangle_{H^m})^{-1}$
and $\eta_1$ is given by
\[
\eta_1 = \left(\Delta_0 + \ep V_{yy}(x,U_n) \right)^{-1}
\left( \xi_Y + \frac{P_Y\ep V_{yy}(x,U_n)\xi_{\perp}}{\langle
\ep V_{yy}(x,U_n) , g \rangle_{H^m}} \right).
\]
Hence, $\|(L_{\ep}^n)^{-1}\xi\|_{H^{m+2}}\leq \frac{1}{c_1|\ep|^q}\|\xi_{\perp}\|_{H^m}
+ \frac{1}{c_2}\left(\|\xi_Y \|_{H^m}+ \ep\|V_{yy}\|_{H^m}\frac{1}{c_1|\ep|^q}
\|\xi_{\perp}\|_{H^m} \right) \leq c |\ep|^{-q}\|\xi \|_{H^m}$.
\end{proof}
Recall that $ (L_{\ep}^n)^{-1}$ is a compact operator
by the regularity theory for elliptic PDE. In particular, the
eigenvalues of $L_{\ep}^n$ are isolated from the spectrum, and if $\lambda_n$
is an eigenvalue of $L_{\ep}^n$, then
the resolvent of $L_{\ep}^n$, written as
$R(L_{\ep}^n,\zeta) = (L_{\ep}^n-\zeta I)^{-1}$, has the representation
\begin{equation}\label{spectral}
R(L_{\ep}^n,\zeta) =
\sum_{j=0}^{\infty}(\lambda_n-\zeta)^jQ_n^{j+1} + \frac{1}{\lambda_n-\zeta}P_n,
\end{equation}
where $Q_n$ is bounded, and
$P_n$ is the spectral projection on the the $\lambda_n$ eigenspace:
\[
P_n = -\frac{1}{2\pi i}\int_{\Gamma}R(\zeta,L_{\ep}^n) \, d\zeta,
\]
and $\Gamma$ is a closed curve enclosing $\lambda_n$ but no other point of
the spectrum (see \cite{Kato66}). The principal eigenvalue, $\lambda_0(\ep)$,
of $L_{\ep}^n$ is simple
because $L_{\ep}^n$ is an elliptic operator, \cite{Evans98}. This means that
$\lambda_0(\ep)$ is analytic in a neighborhood of $\ep = 0$, \cite{Kato66}.
In the iteration process, $(L_{\ep}^n)^{-1} = R(L_{\ep}^n,\zeta) =
\sum_{j=0}^{\infty}\lambda_n^jQ_n^{j+1} + \frac{1}{\lambda_n}P_n,$ will act on
$F_{\ep}(U_n)$. At each step, we need the function $U_n$ to be analytic
in $\ep$, so we need $(L_{\ep}^n)^{-1}F_{\ep}(U_n)$ to be analytic.
\begin{proposition} \label{prop:convergence}
Assuming condition (\ref{degen}) holds, there is a choice
of $M \in \mathbb{N}$ such that, if $U_0 = u^{<M}$, then $U_n$
will be analytic in $\ep$ for all $n \geq 0$, and the
Newton method (\ref{NewtonMethod}) converges uniformly in $\ep$ in a
neighborhood of $\ep = 0$.
\end{proposition}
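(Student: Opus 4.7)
The plan is to run a quadratically convergent Newton iteration while verifying inductively that each iterate $U_n$ extends analytically to a fixed complex disk $B_\delta(0)$ around the origin. Choose $M \in \mathbb{N}$ with $M > 2q$ and set $U_0 = u_\ep^{<M}$; then $U_0$ is polynomial in $\ep$ (hence entire), and the Lindstedt compatibility conditions ensure $\|F_\ep(U_0)\|_{H^m} \leq C_0 |\ep|^M$.

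Inductively, suppose $U_n$ is analytic on $B_\delta(0)$ with $\|F_\ep(U_n)\|_{H^m} \leq C|\ep|^{a_n}$. Combining $\|(L_\ep^n)^{-1}\|_{\mathcal{L}(H^m)} \leq c|\ep|^{-q}$ from Lemma \ref{bound} with the second-order Taylor remainder estimate of $F_\ep$ (controlled through the Gagliardo-Nirenberg bound used in Lemma \ref{lem:analytic}) yields the usual quadratic Newton estimate
\[
\|F_\ep(U_{n+1})\|_{H^m} \leq K|\ep|^{-2q}\,\|F_\ep(U_n)\|_{H^m}^2,
\]
so that $a_{n+1} = 2a_n - 2q$ and hence $a_n = 2^n(M - 2q) + 2q \to \infty$. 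The bound $\|U_{n+1} - U_n\|_{H^{m+2}} \leq c |\ep|^{-q}\,\|F_\ep(U_n)\|_{H^m}$ is then telescope-summable uniformly on a (possibly smaller) disk $B_\delta(0)$, producing an $H^{m+2}$-limit $u_\ep^*$.

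The delicate step is preserving analyticity. Given $U_n$ analytic in $\ep$, the function $F_\ep(U_n)$ is $H^m$-valued analytic by Lemma \ref{lem:analytic}, and $L_\ep^n = -\Delta + \ep V_{yy}(x,U_n)$ is an analytic family of Fredholm operators of index zero. Its principal eigenvalue $\lambda_0^n(\ep)$ is simple with $\lambda_0^n(0) = 0$ and analytic near the origin by Kato's perturbation theory \cite{Kato66}, and condition (\ref{degen}) (together with Lemma \ref{bound}) forces it to vanish to order exactly $q$ at $\ep = 0$. Using the spectral decomposition (\ref{spectral}) at $\zeta = 0$, write
\[
(L_\ep^n)^{-1} = T_n(\ep) + \frac{1}{\lambda_0^n(\ep)}P_n(\ep),
\]
with $T_n$ and the rank-one spectral projector $P_n$ operator-analytic near $\ep = 0$. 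The norm bound $\|F_\ep(U_n)\|_{H^m} = \mathcal{O}(|\ep|^{a_n})$ on $B_\delta(0)$, together with the analyticity of $F_\ep(U_n)$, implies that its Taylor expansion at $\ep = 0$ starts at order at least $a_n$, so $P_n(\ep)F_\ep(U_n)$ vanishes to order at least $a_n \geq M > q$. The apparent pole is therefore removable, $(L_\ep^n)^{-1}F_\ep(U_n)$ is analytic on $B_\delta(0)$, and $U_{n+1}$ inherits analyticity. Passing to the limit, $u_\ep^*$ is analytic as a uniform limit of analytic functions, satisfies $F_\ep(u_\ep^*) = 0$, and its Taylor series at $\ep = 0$ must coincide term-by-term with the Lindstedt series by uniqueness of the formal power-series solution at each order.

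The principal obstacle lies in the coupling inside the inductive step: the operator inverse loses $|\ep|^{-q}$ in norm while the Newton scheme gains a factor $|\ep|^{a_n}$ in the vanishing order, and analyticity demands that the zero order of $P_n(\ep)F_\ep(U_n)$ match or exceed the pole order $q$ of $1/\lambda_0^n(\ep)$. The choice $M > 2q$ is precisely what makes the norm bound and the order of vanishing improve compatibly along the iteration, so that quadratic convergence and $\ep$-analyticity are maintained simultaneously.
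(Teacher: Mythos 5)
Your proof is correct and follows essentially the same route as the paper: a quadratically convergent Newton iteration with the $H^m$-norm loss $\|(L_\ep^n)^{-1}\|\leq c|\ep|^{-q}$ from Lemma~\ref{bound}, the induction $a_{n+1}\geq 2a_n-2q$ giving $a_n\geq 2^n(M-2q)+2q$, and analyticity of each $U_{n+1}$ secured via the Kato spectral decomposition of $(L_\ep^n)^{-1}$ together with the fact that $F_\ep(U_n)$ vanishes to order $a_n>q$ at $\ep=0$, so the pole of $1/\lambda_0^n(\ep)$ is removable. One small over-claim worth noting: Lemma~\ref{bound} and condition~(\ref{degen}) only give that $\lambda_0^n(\ep)$ vanishes to order \emph{at most} $q$ (the paper calls this order $p$ and keeps it separate), not ``exactly $q$'' as you assert; but your argument uses only the upper bound, so nothing breaks.
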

\begin{proof}
In the iteration process, $(L_{\ep}^n)^{-1} = R(L_{\ep}^n,\zeta) =
\sum_{j=0}^{\infty}\lambda_n^jQ_n^{j+1} + \frac{1}{\lambda_n}P_n,$ will act on
$F_{\ep}(U_n)$.
As explained in the previous paragraph, $\lambda_0(\ep)$ is analytic
in a neighborhood of $\ep=0$, and will have a zero
of order $p \in \mathbb{N}$ at $\ep=0$. From the result of
Lemma \ref{bound} we may assume  $p < q$.

We will use induction on $n$ to show the analyticity of $U_{n+1}$.
$F_{\ep}(u^{<M})$ has a zero of order $M$ at $\ep = 0$ by construction
of $u^{<M}$, and we take $M>2q$.
By the expansion in (\ref{spectral}) and the result from Lemma \ref{lem:analytic},
for $\zeta = 0$ and
$\lambda_n = \lambda_0(\ep)$, $(L_{\ep}^0)^{-1}F_{\ep}(u^{<M})$
has
a zero of order $M-p$ if $M>p$ or a pole of order $p-M$
if $p>M$. Thus, taking $M>2q$ is more than enough to ensure $U_1=
u^{<M}+(L_{\ep}^0)^{-1}F_{\ep}(u^{<M})$ is
analytic in $\ep$, since $q > m$.
From (\ref{NewtonMethod}),
$F_{\ep}(U_1) = D^2F_{\ep}(U_0)(DF_{\ep}(U_0)^{-1}F_{\ep}(U_0))^2 + \ldots $,
and since $D^2F_{\ep}(U_0) = \ep V_{yyy}(x,U_0)$ is bounded,
we have $\| F_{\ep}(U_1)\|_{H^m} \leq C |\ep |
(\| (L_{\ep}^0)^{-1} \|_{\mathcal{L}(H^m)}\|F_{\ep}(U_{0})\|_{H^m})^2$.
Hence, $\| F_{\ep}(U_1)\|_{H^m} \leq C |\ep |^{1-2q+2M}$ by Lemma \ref{bound} and because
$\|F_{\ep}(U_{0})\|_{H^m} \leq |\ep |^M$. We chose $M$ such that $M-q > q$,
so  $\| F_{\ep}(U_1)\|_{H^m} \leq C |\ep |^{M_1}$ where $M_1 = 1+ 2(M-q) > 2q$.
We also have $\|U_1-U_0\|_{H^m} \leq c |\ep |^q$, so
\[
L_{\ep}^1 = -\Delta + \ep V_{yy}(U_1) =
-\Delta + \ep V_{yy}(U_0 + U_1-U_0) = L_{\ep}^0 + \mathcal{O} (\ep^q ).
\]
Hence, $L_{\ep}^1$ is a small perturbation of $L_{\ep}^0$, and
$\lambda_1(\ep)$ will have a zero of order $p$.
Thus, we have established the first step of the induction on $n$,
because $U_1$ is analytic in $\ep$, $F_{\ep}(U_1)$ has a zero of order $M_1 > 2q$,
and the principal eigenvalue has a zero of order $p <  q$ at $\ep =0$.

Now assume that $U_n$ is analytic in $\ep$ and $F_{\ep}(U_n)$ has a zero
of order $M_n >2q$ at $\ep = 0$.
$F_{\ep}(U_{n+1}) = D^2F_{\ep}(U_n)(DF_{\ep}(U_n)^{-1}F_{\ep}(U_n))^2 + \ldots $
and $\|D^2F_{\ep}(U_n)\|_{\mathcal{L}(H^m)}\leq \ep \| V_{yyy}\|_{L^{\infty}}$ is
independent of $n$. So
\begin{equation} \label{Un}
\| F_{\ep}(U_{n+1})\|_{H^m} \leq
C \ep \|(DF_{\ep}(U_n)^{-1}F_{\ep}(U_n))\|_{H^m}^2 \leq C \ep
(\ep^{M_n-q})^2.
\end{equation}
From (\ref{Un}) we have that $U_{n+1}$ is analytic, and $F_{\ep}(U_{n+1})$
has a zero of order at least $2q$ at $\ep = 0$. Just as for
the $n=0$ case, $\|U_{n+1}-U_{n}\|_{H^m} \leq c |\ep |^q$ so
$\lambda_{n+1}(\ep)$ has a zero of order $p <  q$ at $\ep =0$,
and the induction is complete.

To prove uniform convergence, we have from (\ref{Un})
the recurrence formula:
$$\| F_{\ep}(U_{n})\|_{H^m} <  (\ep^{-q}\|F_{\ep}(U_{n-1})\|_{H^m})^2,$$
provided we choose $\ep$ small enough (independently of $n$)
so that $C\ep <1$. Therefore, $\| F_{\ep}(U_{n})\|_{H^m} <
| \ep |^{-2q(2^n-1)}\|F_{\ep}(U_{0})\|_{H^m}^{2^n} \leq c|\ep |^{2^n(M-2q)+2q}$.
Hence, with $M>2q$, the error on
$F_{\ep}(U_{n})$ is bounded by $c \ep^{2^n}$, $c$ independent of $n$.
\end{proof}

\subsection{Connecting orbits and corners of the energy}
The connecting orbits, or heteroclinic orbits, exist
only when $\mathcal{M}(\bar{\omega})$ fails to produce a foliation of $\mathbb{T}^{d+1}$,
as described at the beginning of Section \ref{Pert}. Each
$u \in \mathcal{M}(\bar{\omega})$ satisfies $\Delta u = \ep V_y(x,u)$ because
it is a minimizer of $S_{\ep}$. The order-zero approximation
to $u$ has the form $u_0 = \omega \cdot x + \alpha$, for $\alpha \in [0,1)$.
This is a continuous family, whose graphs foliate $\mathbb{T}^{d+1}$.

From Lemma \ref{lem1} we know
that if $V$ satisfies the twist condition
(\ref{twist}) then there are at least two choices
of $\alpha \in [0,1)$ such that the Lindstedt equations (\ref{eqnList})
can be solved to all orders. Recall that the
function $\Phi_1$, defined in Lemma \ref{lem1}, has at least two zeros
in $[0,1)$. If the zero set of $\Phi_1$ is not all of $[0,1)$, then
a choice of $\alpha$ must be made in order to solve
$\Delta u_1 = V_y(x,\omega \cdot x+\alpha)$ for periodic $u_1$. Thus, the
order-$\ep$ approximations, given by $u_0 + \ep u_1$
are not a continuous family indexed by $\alpha \in [0,1)$, but
rather by a strict subset of $[0,1)$. The graphs of the functions in t
his family no longer foliate $\mathbb{T}^{d+1}$.

If $\Phi_1(\alpha) \equiv 0$, no gaps appear
in the approximation up to first order in $\ep$.
In this case, we can find periodic $u_1$ solving
$\Delta u_1 = V_y(x,\omega \cdot x+\alpha)$ for arbitrary $\alpha$, and
the order-$\ep$ approximations do foliate $\mathbb{T}^{d+1}$.
We can then move
on to try to solve $\Delta u_2 = [V_y(x,u^{<2})]_{1}$ for periodic $u_2$.
With $\alpha$ free, we define $\Phi_2 : [0,1) \to \mathbb{R}$ by
$ \Phi_2(\alpha) = \int_{N\mathbb{T}^d}  [V_y(x,u^{<2})]_{1} dx$.  Now either
$\Phi_2(\alpha) \equiv 0$ or there is some $\alpha \in [0,1)$ with
$\Phi_2(\alpha) \ne 0$. In the latter case, we must make a choice of
$\alpha$ such that $\Phi_2(\alpha) = 0$. In the former case, we move
on to find $u_3$. This can continue as long as
$ \Phi_j(\alpha) = \int_{N\mathbb{T}^d}  [V_y(x,u^{<j})]_{j-1} dx$ is identically
zero.
Otherwise, a choice of $\alpha$ must be fixed, and the foliation breaks down.

If at the $j$-th step we find that $\Phi_j(\alpha) \ne 0$ for some value of
$\alpha$ then we make the ansatz that the heteroclinic orbit will be of
the form
$u_h(x)  = \omega \cdot x + \alpha(\ep^{j/2}x) + \ep u_1(x) + \ldots$. And
$\alpha(\ep^{j/2}x) \to \alpha_{\pm}$ as $x \to \pm \infty$ where
$\alpha_{\pm}$
satisfy $\Phi_j(\alpha_{\pm}) = 0$. Then $\Delta \alpha = \mathcal{O} (\ep^j)$ and
the $j$-th order equation will be $\Delta \alpha + \Delta u_j =
[V_y(x,u^{<j})]_{j-1}$.
Thus, $\alpha$ will solve a PDE of the form $\Delta \alpha = f(x,\alpha)$
with boundary conditions
$\alpha(\ep^{j/2}x) \to \alpha_{\pm}$ as $x \to \pm \infty$,
where $\int_{N\mathbb{T}^d} f(x,\alpha) \, dx \ne 0$. That is, $f$ is the term that
keeps $[V_y(x,u^{<j})]_{j-1}$
from having a zero average for any $\alpha$. This is carried out in
detail in Section \ref{example}
for a specific example.

\subsection{Energies involving the fractional Laplacian}
The existence of minimizers in the case $\omega\in \frac{1}{N}\mathbb{Z}^d$
has been established for energy functionals involving
fractional powers of the laplacian, \cite{LlaveVal09}, \cite{BlassLlaveVal10}.
As described in the remark at the end of Section \ref{implement},
the Euler-Lagrange equation for the functional
\begin{equation*}
  S_{\! \ep}^{\delta}(u) = \int_{\mathbb{R}^d} u \, (-\Delta)^{\delta}u  + \ep V(x,u) dx,
\quad \delta \in (0,1), \quad u = \omega \cdot x+z(x),
\end{equation*}
is
\[
-(-\Delta )^{\delta}u=V_y(x,u), \quad u = \omega \cdot x+z(x).
\]
Much of what has been described so far regarding solutions
of $\Delta u = V_y(x,u)$ carries over to this case. However,
the analogous properties of the associated minimal average
energy $A_{\ep}^{\delta}$ that are presented for $\delta =1$
in \cite{Senn91} and \cite{Senn95} need to be established
if one desires to investigate the differentiability properties
of $A_{\ep}^{\delta}$ in this case of non-local energy. This remains
an interesting challenge.

\section{An example of the Lindstedt series} \label{example}
Consider the potential $V:\mathbb{R}^{d+1} \to \mathbb{R}$ given by
 $V(x,y) = \sin(2\pi k \cdot x)\cos(2\pi y)$, with $k \in \mathbb{Z}^d$.
 For a fixed $\omega\in \mathbb{R}^d$, We compute the
jumps in the derivative of the average energy functional, $A_{\varepsilon}(\omega)$
defined in (\ref{Ae}),
using asymptotic expansions of the connecting orbits.

Let $u_{\ep}(x)$ solve $\Delta u_{\ep} = \ep V(x,u_{\ep})$. Writing
$u_{\ep} = \sum_j \ep^j u^{(j)}$ we see that $\Delta u_0 = 0$, and thus
$u_0 = \omega \cdot x +\alpha$, $\omega \in \mathbb{R}^d$,
$\alpha \in \mathbb{R}$. We are considering $\omega$ fixed, but
the choice of $\alpha$ is free
and we have a family of solutions parametrized by $\alpha \in \mathbb{R}$.

However, in order to calculate $u_1$ we must solve $\Delta u_1 = V_y(x,u_0)$
for a periodic function $u_1$. The average of $V_y(x,u_0)$ depends on $\omega$,
so in this example we will choose $\omega= k$ so that the average of
$V_y(x,u_0)$
is nonzero for some choices of $\alpha$.
Thus, the requirement that $V_y(x,u_0) =
-2\pi\sin(2\pi k\cdot x)\sin(2\pi k\cdot x + 2\pi \alpha)$
has zero average forces $\cos(2\pi \alpha) = 0$. We say
that $u_{\ep}$ has a first order resonance if $\alpha$ is restricted
in solving for the order $\ep^1$ term in the Lindstedt series.

We can calculate the first two terms in the Lindstedt series for each
admissible value of $\alpha = 1/4, \, 3/4$. We have $\Delta u_1 =
\mp \pi \sin(4\pi kx)$ so that the two solutions are:
\[ \begin{split}
u_{\ep}^c &= kx + \frac{3}{4} -\frac{\ep}{16\pi |k|^2}
\sin(4\pi k\cdot x)+
O(\ep^2)\\
u_{\ep}^m &= kx + \frac{1}{4} + \frac{\ep}{16\pi |k|^2}
\sin(4\pi k \cdot x)
+ O(\ep^2).
\end{split} \]
The superscripts $c$ and $m$ indicate that the solution for
$\alpha = 1/4$
is a minimizer of the energy functional
\[
A_{\varepsilon}(u) = \lim_{N\to \infty}
\frac{1}{N^d} \int_{[0,N]^d}\frac{1}{2}|\nabla u|^2 + \ep V(x,u) \, dx,
\]
and the solution for $\alpha = 3/4$ is a critical point, but not a minimizer.
We restrict our attention to minimizers, where $A_{\varepsilon}$ can be considered
as a function of the rotation vector $\omega$.

To compute the jump in the gradient of $A_{\varepsilon}$ at $\omega = k$
using the formula in \cite{Senn95} we need to find
minimizers of $A_{\varepsilon}$ that are heteroclinic between $u_{\ep}^m$ and
$u_{\ep}^m+1$.
We search for an asymptotic solution to $\Delta u = \ep V_y(x,u)$ that
has the form $u_{\ep}^h(x) = kx + \alpha(\sqrt{\ep}x) +
\ep u_1(x) +
o(\ep)$.

The order $\ep^1$ equation is
\begin{equation}\label{eqn:alpha_u} \begin{split}
\Delta \alpha + \Delta u_1 &= -2\pi \sin(2 \pi kx)\sin(2\pi kx+2\pi
\alpha) \\
&=-\pi\cos(2\pi \alpha) + \pi \cos(2 \pi \alpha) \cos(4 \pi kx) - \pi
\sin(2\pi \alpha)\sin(4 \pi kx).
\end{split} \end{equation}
We want to choose $\alpha$ so that it is essentially
one-dimensional (i.e. for some $\eta \in \mathbb{R}^d$, we want $\alpha$ to be
a function of $\eta \cdot x$). It should also satisfy $\Delta \alpha = -\pi
\cos(2 \pi \alpha)$ and we can eliminate those two terms from the
equation above. At this point, any choice of $\eta$ is reasonable, and
in the end we will chose $\eta$ to be in the direction in which we want
to differentiate $A_{\varepsilon}(\omega)$ (thus, $\eta$ will typically be a standard
basis vector).

Letting $\hat{\eta}$ denote $\frac{1}{| \eta |}\eta$,
we have
\[
\alpha(z) = \frac{1}{\pi}\arctan(\sinh(\sqrt{2}\pi \hat{\eta}\cdot z))
+ \frac{3}{4}
\]
Recall that in the expression for $u$, $\alpha$ is evaluated at
$z=\sqrt{\ep}x$.
Expanding $\sin(2 \pi \alpha(\sqrt{\ep \ }x))$ and
$\cos(2 \pi \alpha(\sqrt{\ep \ }x))$
in Taylor series, for this choice of $\alpha$, we have
\[ \begin{split}
\sin(2 \pi \alpha(\sqrt{\ep \ }x)) &= -1 + 2(\sqrt{2\ep \ }\pi
\hat{\eta}\cdot x)^2 -
\frac{4}{3})(\sqrt{2\ep \ }\pi \hat{\eta}\cdot x)^4+ \ldots \\
&=-1 + O(\ep) \\
\cos(2 \pi \alpha(\sqrt{\ep \ }x)) &= 2(\sqrt{2\ep \ }\pi
\hat{\eta}\cdot x)
- \frac{5}{3}(\sqrt{2\ep \ }\pi \hat{\eta}\cdot x)^3 + \ldots \\
&= O(\sqrt{\ep \ }).
\end{split} \]
Then equation (\ref{eqn:alpha_u}) becomes $\Delta u_1 = \pi \sin(4\pi kx)  +
O(\sqrt{\ep \ })$
and we get for $u_{\ep}^h$ the expression:
\[ \begin{split}
u_{\ep}^h(x) &= k\cdot x + \alpha(\sqrt{\ep \ }x) + \ep u_1(x) +
O(\ep^{3/2}) \\
& = k \cdot x + \frac{1}{\pi}\arctan(\sinh(\sqrt{2\ep \ }\pi
\hat{\eta}\cdot x)) + \frac{3}{4}
- \frac{\ep}{16 \pi |k|^2}\sin(4\pi kx) + o(\ep).
\end{split}\]

To be precise, we should write $u_{\ep,\eta}^h$ to show the dependence of
$u_{\ep}^h$ on $\eta$. It is important to notice that $u_{\ep,\eta}^h$ is
heteroclinic from $u_{\ep}^m+1$ to $u_{\ep}^m$ in the direction $-\eta$.
Similarly, $u_{\ep,-\eta}^h$ is heteroclinic from $u_{\ep}^m+1$ to $u_{\ep}^m$
in the direction $\eta$, and from $u_{\ep}^m$ to $u_{\ep}^m+1$
in the direction
$-\eta$.

\subsection{Computing the gradient of $A_{\varepsilon}(\omega)$}

Here we restrict $\eta$ to be a standard basis vector $e_j$. To ease notation
we will write $M(x) = u_{\ep}^m(x)$ and $H_{e_j}(x) = u_{\ep,e_j}^h(x)$.

A formula for
the derivative $D_{e_j}A_{\varepsilon}(\omega)$ is found in \cite{Senn95}, and in our
case becomes
\begin{equation} \label{DAe}
\begin{split}
D_{e_j}A_{\varepsilon}(\omega) &= \int_{[0,1]^{d-1}}\int_{-\infty}^{\infty}
\frac{1}{2}|\nabla M|^2 + \ep V(x,M)
-\frac{1}{2}|\nabla H_{e_j}|^2 - \ep V(x,H_{e_j}) \,
dx_j \, dx^{d-1}\\
D_{-e_j}A_{\varepsilon}(\omega) &= \int_{[0,1]^{d-1}}\int_{-\infty}^{\infty}
\frac{1}{2}|\nabla H_{-e_j}|^2 + \ep V(x,H_{-e_j})
 -\frac{1}{2}|\nabla M|^2 -\ep V(x,M) \,  dx_j \, dx^{d-1}.
\end{split}
\end{equation}
Thus, the derivative of $A_{\varepsilon}(\omega)$ in the direction $e_j$ is
difference in the energies
of the minimizer defining the top of each connected component of
gap and the minimizing heteroclinic in the
direction $e_j$.
Note that $|\nabla (M+1)|^2 + \ep V(x,M+1) = |\nabla M|^2 + \ep V(x,M)$.

\subsection{Two dimensional case} \label{twoD}
We now focus on the two dimensional case so that we can compare
with the numerical computations, which were done for the
two dimensional case. The computations in higher dimensions
are impractical for us at the moment.

In our example, $\nabla H_{\eta} = k + \eta \sqrt{2\ep}/
\cosh(\sqrt{2\ep}\pi \eta x) -k\ep \cos(4\pi kx)/4|k|^2$.
If we select $\eta = e_1$ and compute $D_{e_1}S(k)$ we have
\[
\frac{1}{2}|\nabla H_{e_1}|^2 = \frac{1}{2}|k|^2 + \frac{k_1\sqrt{2\ep }}
{\cosh(\sqrt{2\ep \ }\pi x_1)} + \frac{ \ep}{\cosh^2(\sqrt{2\ep \ }\pi x_1)} -
\frac{\ep}{4} \cos(4\pi kx) + O(\ep^{3/2}).
\]
The heteroclinic solution $H_{-e_1}$ has the same expression as
above, with a sign change on the second term. The  $|k|^2$ terms  from
$|\nabla H_{\pm e_1}|$ and $|\nabla M| = |k|^2 + O(\ep)$ will cancel
when computing
$D_{\pm e_1}A_{\varepsilon}(\omega) $. When computing the jump in the derivative, that is
the sum $D_{e_1}A_{\varepsilon}(\omega) +D_{-e_1}A_{\varepsilon}(\omega) $,
the terms $\pm \frac{k_1\sqrt{2\ep}}
{\cosh(\sqrt{2\ep \ }\pi x_1)}$ will cancel, and we are left with two terms
of the form $\frac{ \ep}{\cosh^2(\sqrt{2\ep \ }\pi x_1)}$. These
two terms then contribute for a total of
\[
\int_{\mathbb{R}} \frac{2 \ep}{\cosh^2(\sqrt{2\ep \ }\pi x_1)}\, dx_1
\]
If we integrate over all of $\mathbb{R}$ then this integral is
\[
\int_{-\infty}^{\infty} \frac{2 \ep}{\cosh^2(\sqrt{2\ep \ }\pi x_1)}\, dx_1
= \frac{2\sqrt{2}}{\pi}\ep^{1/2}.
\]
Computing the contributions from the potentials,
$\int_{[0,1]^{d-1}} \ep V(x,M) - \ep V(x,H_{\pm e_1}) \,
dx_j \, dx^{d-1}$, is not easy analytically, but numerically we find
that they also yield $ \frac{2\sqrt{2}}{\pi}\ep^{1/2}$. Thus, the
jump in the derivative is $D_{e_1}A_{\varepsilon}(k)+D_{-e_1}A_{\varepsilon}(k) =
\frac{4\sqrt{2}}{\pi}\ep^{1/2} \approx 1.8\ep^{1/2}$, which agrees
well with the numerical computations.


\subsection{Comparison with results from the numerical
computations}
\begin{figure}[!htb]
\begin{center}
\includegraphics[height=1.8in]{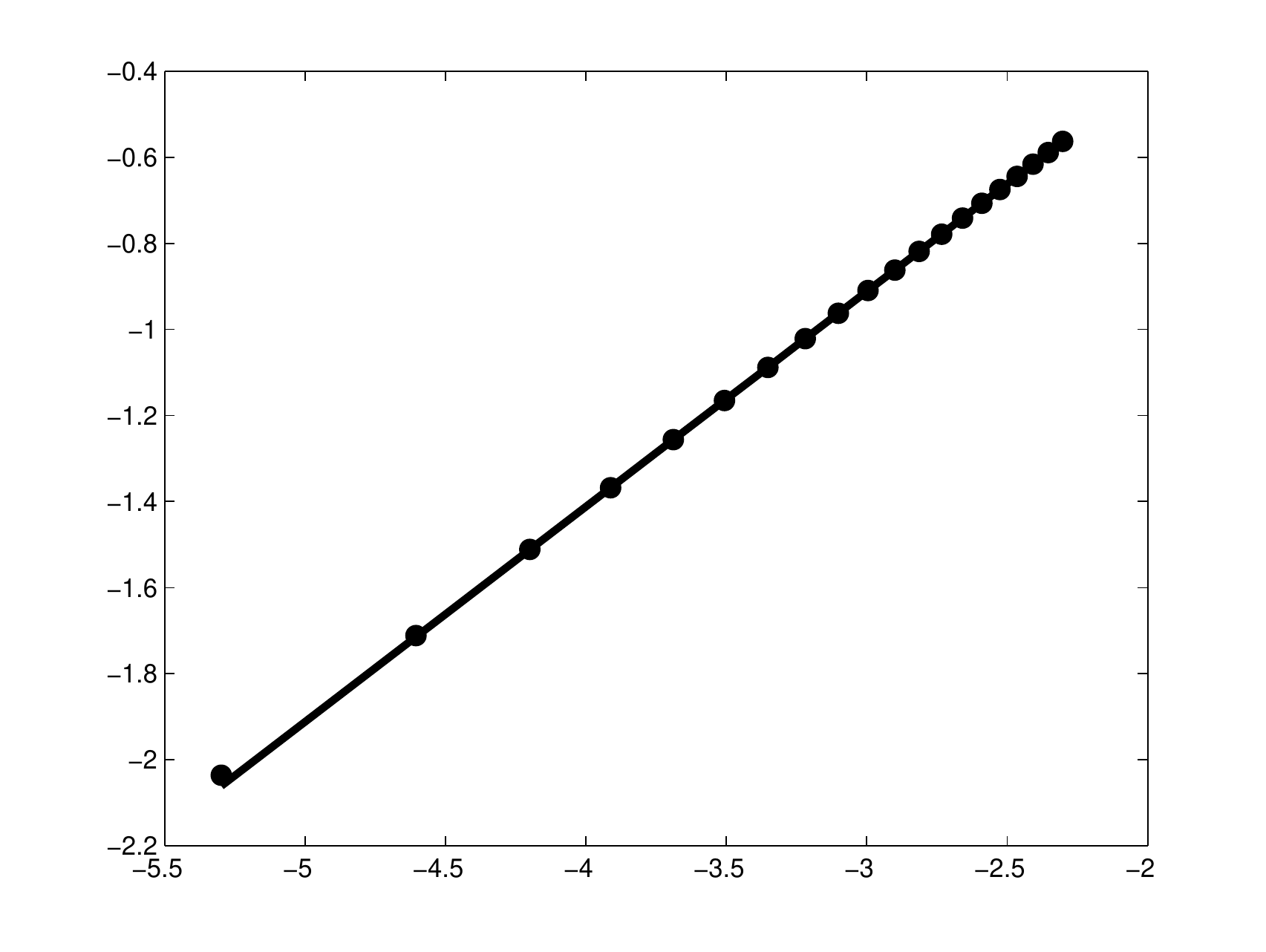}
\caption{Logarithm of the jump in
$D_{e_1}A_{\varepsilon}(\omega)$ against $\log(\ep)$ in the two dimensional example:
$V(x,u) = \ep \sin(2\pi kx)\cos(2\pi u)$,
where $k=(2,3)$. This is a ``first order resonance'' so
$\omega= k$.
256 modes were used for each
Fourier frequency direction $\xi_1$, $\xi_2$. }
\label{fig1}
\end{center}
\end{figure}
\begin{figure}[!htb]
\begin{center}
\subfigure{\includegraphics[scale=.34]{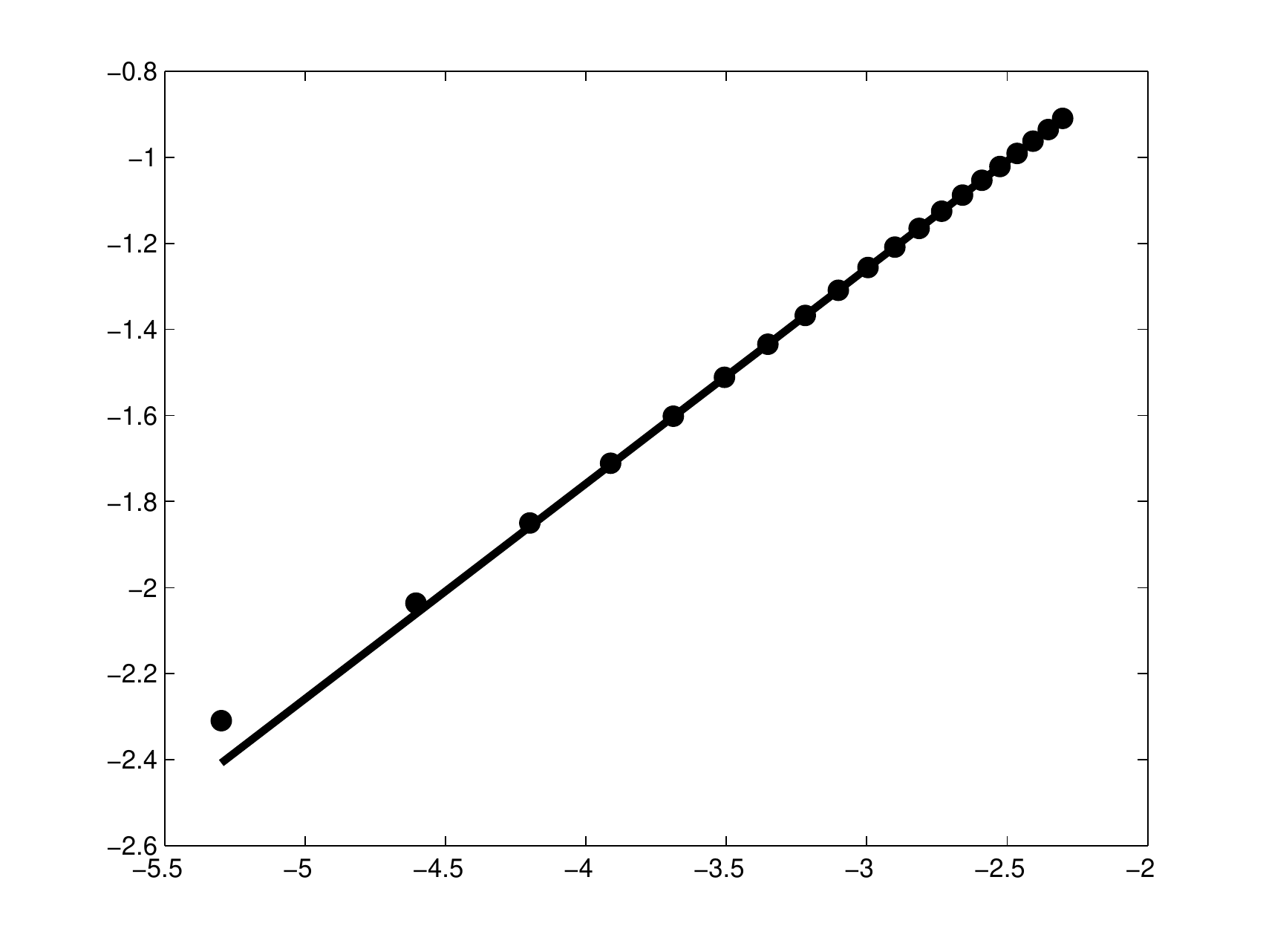}\label{pic1}}
\subfigure{\includegraphics[scale=.34]{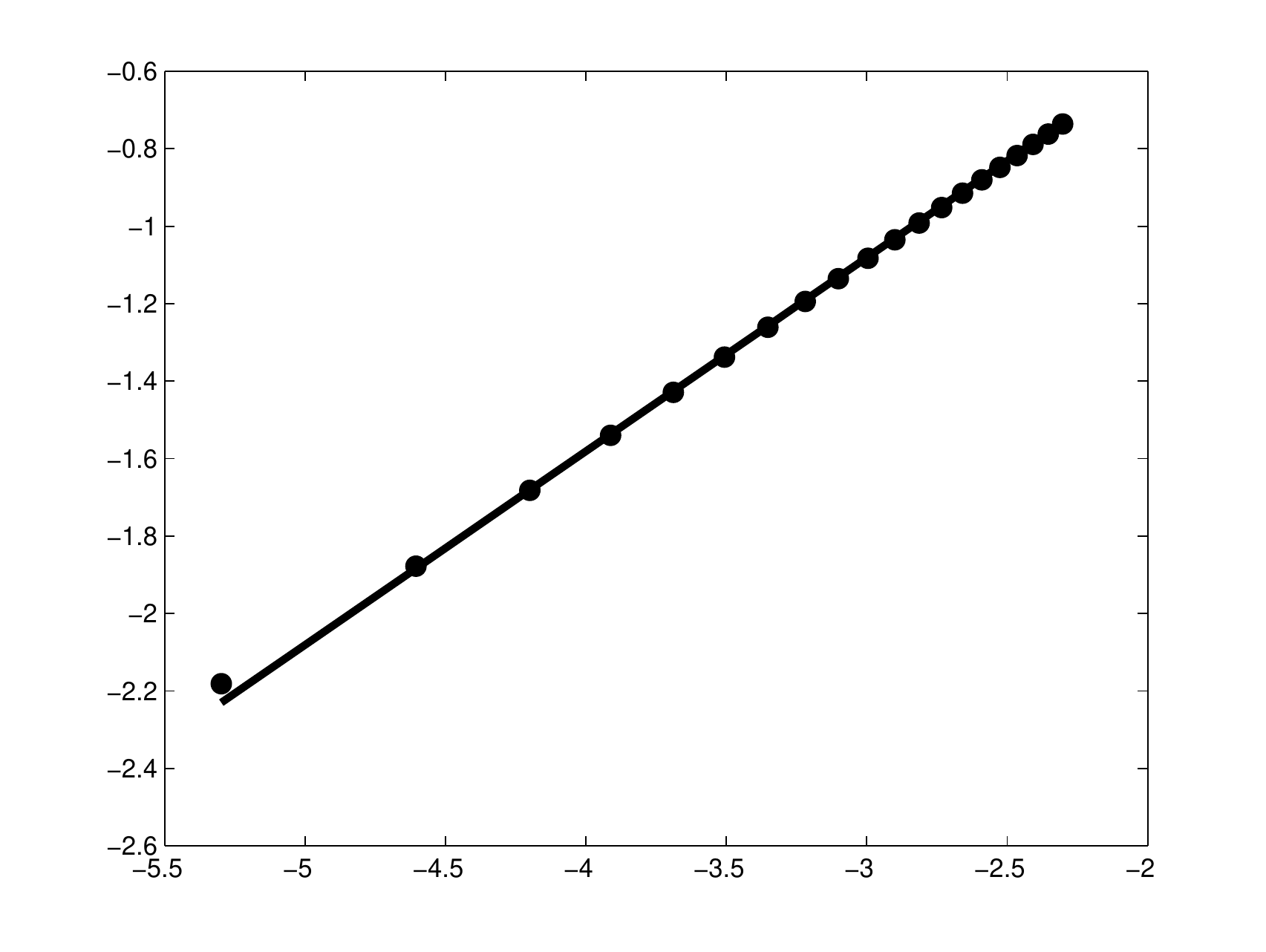}\label{pic2}}
\caption{Plots of the logarithm of the jump in
$D_{e_1}A_{\varepsilon}(\omega)$ against $\log(\ep)$
for different potential functions.
$V(x,u) = \ep \sin(2\pi k_1 x_1)\sin(2\pi k_2 x_2) \cos(2\pi u)$
on the left and
$V(x,u) = \frac{\ep}{2} \sin(2\pi kx)(\cos(2\pi u) + \sin (2\pi u))$
on the right. $k=(2,1)$ in each case, and $\omega= k$.
256 modes were used for each
Fourier frequency direction $\xi_1$, $\xi_2$.}
\label{fig2}
\end{center}
\end{figure}
We use the Sobolev gradient method to compute the minimizers
$u_\ep$ for several values of $\ep$, where each $u_\ep$ has
rotation vector $\omega$. We then repeat this computation for the same values
of $\ep$, but for rotation vectors $\omega\pm \Delta \omega e_j$.
With these minimizers
we can compute $A_{\varepsilon}(\omega)$ for each value of $\ep$ and each rotation vector.
Taking differences over the $\omega$-variable, we can approximate
the derivative of $A_{\varepsilon}(\omega)$ with respect to $\omega$.
\[
D_{e_j}A_{\varepsilon}(\omega) \approx \frac{A_\ep(\omega+\Delta \omega e_j)
-A_\ep(\omega)}{\Delta \omega}.
\]
The plots in Figures \ref{fig1} and \ref{fig2}
 are for three cases of potential function.
In each case we examine a first-order resonance, so the choice of $\alpha$
in the Lindstedt series is made when solving the order $\ep$ equation.
This also means the twist condition (\ref{twist}) is satisfied.

Figure \ref{fig1} is log-log plot of the jump in the
$e_1$ direction of the gradient of $A_{\varepsilon}(\omega)$
versus $\ep$.
The dotted line is the logarithms of the numerically computed points
plotted against $\log(\ep)$. The solid line is the
fit $J(\ep) = \log(\frac{4\sqrt{2}}{\pi}\ep^{1/2})$, which was derived
in Section \ref{twoD}. The same relation is found for the
resonance at $\omega= -k$.

Figure \ref{fig2} has the same type of plot
for different
choices of potential function.
In \ref{pic1} the potential is
\[
V(x,u) = \ep \sin(2\pi k_1 x_1)\sin(2\pi k_2 x_2) \cos(2\pi u),
\]
and the choice of resonance was $\omega= k = (2,1)$.
In this case, the fit is $J(\ep) = \log(\frac{4}{\pi}\ep^{1/2})$.
The same
relation is found at $\omega= (\pm k_1,\pm k_2)$ and
at $\omega= (\pm k_2, \pm k_1)$.

In \ref{pic2} the potential is
\[
V(x,u) = \frac{\ep}{2} \sin(2\pi kx)(\cos(2\pi u) + \sin (2\pi u)),
\]
and the choice of $\omega$ was $\omega= k = (2,1)$.
The same relation
is found at $\omega= -k$.

Higher order resonances can be computed, for instance at $\omega=2k$ in
each of the previous examples. However, the behavior of the jump in
$DA_{\varepsilon}(\omega)$ behaves like $\ep$ to a power greater than one. To get reasonable
accuracy one needs to take many more Fourier modes in the numerical
approximation, which is impractical for us at the moment
in the case of two spatial dimensions.



\def\cprime{$'$}

\medskip
Received January 2011; revised April 2011.
\medskip

\end{document}